\newcommand{\rmv}[1]{}
\newcommand{\HH}{\mathcal{H}}
\newcommand{\HM}{\mathcal{M}}
\newcommand{\HF}{\mathcal{F}}
\newcommand{\HL}{\mathcal{L}}
\newcommand{\HD}{\mathcal{D}}
\newcommand{\HB}{\mathcal{B}}
\newcommand{\D}{\mathbb{D}}
\newcommand{\C}{\mathbb{C}}
\newcommand{\T}{\mathbb{T}}
\newcommand{\intD}{\int_{\D}}
\theoremstyle{plain}
\newtheorem{theorem}{Theorem}%[section]
\newtheorem{lemma}[theorem]{Lemma}
\newtheorem{prop}[theorem]{Proposition}
\newtheorem{definition}[theorem]{Definition}
\theoremstyle{definition}
\begin{document}

\title{Sub-Bergman Hilbert spaces on the unit disk III}

\author{Shuaibing Luo}
\address{Shuaibing Luo, School of Mathematics, Hunan University, Changsha, Hunan 410082, China}
\email{sluo@hnu.edu.cn}

\author{Kehe Zhu}
\address{Kehe Zhu, Department of Mathematics and Statistics, State University of New York, Albany, NY 12222, USA}
\email{kzhu@albany.edu}

\subjclass[2020]{Primary: 30H15, 30H10, 30H05, 47B35; secondary: 30H45, 30H25, 30H30, 47B07.}
\keywords{Bergman space, Nevanlinna-Pick kernel, Toeplitz operator, defect operator, sub-Bergman spaces,
Dirichlet space, Bloch space, de Branges-Rovnyak spaces.}

\thanks{Luo was supported by National Natural Science Foundation of China (No. 12271149). Zhu was supported by National Natural Science Foundation of China (No. 12271328).}
\date{}

\begin{abstract}
For a bounded analytic function $\varphi$ on the unit disk $\D$ with $\|\varphi\|_\infty\le1$ we consider the
defect operators $D_\varphi$ and $D_{\overline\varphi}$ of the Toeplitz operators $T_\varphi$ and
$T_{\overline\varphi}$, respectively, on the weighted Bergman space $A^2_\alpha$. The ranges of $D_\varphi$
and $D_{\overline\varphi}$, written as $H(\varphi)$ and $H(\overline\varphi)$ and equipped with appropriate
inner products, are called sub-Bergman spaces.

We prove the following three results in the paper: for $-1<\alpha\le0$ the space $H(\varphi)$ has a complete
Nevanlinna-Pick kernel if and only if $\varphi$ is a M\"{o}bius map; for $\alpha>-1$ we have
$H(\varphi)=H(\overline\varphi)=A^2_{\alpha-1}$ if and only if the defect operators $D_\varphi$ and
$D_{\overline\varphi}$ are compact; and for $\alpha>-1$ we have $D^2_\varphi(A^2_\alpha)=
D^2_{\overline\varphi}(A^2_\alpha)=A^2_{\alpha-2}$ if and only if $\varphi$ is a finite Blaschke product.
In some sense our restrictions on $\alpha$ here are best possible.
\end{abstract}

\maketitle

%\tableofcontents

\section{Introduction}

Let $\HH$ be a Hilbert space and $B(\HH)$ be the space of all bounded linear operators on $\HH$. If $T\in B(\HH)$ is
a contraction, we use $H(T)$ to denote the range space of the defect operator $(I-TT^*)^{1/2}$. It is well known that
$H(T)$ is a Hilbert space with the inner product
$$\langle (I-TT^*)^{1/2} x, (I-TT^*)^{1/2}y\rangle_{H(T)} = \langle x, y\rangle_\HH,$$
where $x, y \in \HH \ominus \ker(I-TT^*)^{1/2}$. Spaces of the type $H(T)$ have been studied extensively in the
literature, mostly in connection with operator models. %See \cite{??} for example.

There are two special cases that are especially interesting. First, if $\HH=H^2$ is the classical Hardy space on the
unit disk $\D$, and if $T=T_\varphi$ is the analytic Toeplitz operator (multiplication operator) induced a function
$\varphi$ in the unit ball $H^\infty_1$ of $H^\infty$, then $H(T_\varphi)$ is called a sub-Hardy space (or
a de Branges-Rovnyak space). Such spaces appeared in the work \cite{deB85} of de Branges concerning the
Bieberbach conjecture and were studied systematically in Sarason's monograph \cite{Sa94}.
See also the recent monograph \cite{FM16}.

Second, if $\HH=A^2$ is the classical Bergman space on the unit disk and if $T=T_\varphi$ is the analytic Toeplitz
operator (multiplication operator) on $A^2$ for some $\varphi\in H^\infty_1$, then $H(T_\varphi)$ is naturally called
a sub-Bergman space. Such spaces have been studied by several authors in the literature, beginning
with \cite{Zhu96, Zhu03} and including \cite{AJ10, Chu18, Chu19, Chu20, GHLP, Su06, Sy10}.

In this paper we focus on sub-Bergman spaces in the weighted case. More specifically, we will consider a family
of ``generalized Bergman spaces'' $A^2_\alpha$.
We will also consider multiplications operators $T_\varphi=T_\varphi^\alpha: A^2_\alpha\to A^2_\alpha$ induced
by functions from $\HM_1(A^2_\alpha)$, the closed unit ball of the multiplier algebra $\HM(A^2_\alpha)$ of
$A^2_\alpha$. It is natural for us to use the notation $H^\alpha(\varphi)$ for the space $H(T_\varphi)$. Similarly,
we will write $H^\alpha(\overline\varphi)$ for the space $H(T)$ when $T$ is the adjoint operator
$T^*_\varphi: A^2_\alpha\to A^2_\alpha$. Note that for $\alpha\ge-1$ we have $\HM(A^2_\alpha)=H^\infty$.

Motivated by the main results obtained in \cite{Zhu03, Chu20}, we will study the following three problems:
\begin{itemize}
\item[(a)] When does $H^\alpha(\varphi)$ have a complete Nevanlinna-Pick (CNP)
kernel? For $-1<\alpha\le0$ we show that $H^\alpha(\varphi)$ has a CNP kernel if and only if $\varphi$ is a M\"{o}bius
map. A (more subtle) characterization is also obtained when $-2<\alpha<-1$. The upper bound $0$ and lower
bound $-2$ for $\alpha$ appear to be natural obstacles. Here, even the result for the case $\alpha=0$ is new.
The case $\alpha=-1$ was studied in \cite{Chu20}.
\item[(b)] When do we have
$$H^\alpha(\varphi)=H^\alpha(\overline\varphi)=A^2_{\alpha-1}?$$
For $\alpha>-1$ we show that the identities above hold if and only if $\varphi$ is a finite Blaschke product, which is
also equivalent to the corresponding defect operators being compact. Our methods rely on the assumption $\alpha>-1$
in a very critical way. In particular, our main results definitely become invalid when $\alpha=-1$ (the Hardy space case).
Some special cases of our main results for this problem can be found in \cite{Zhu03, Su06, AJ10, Chu18, Chu19, GHLP}.
\item[(c)] When do we have
$$(I-T_\varphi T_{\overline\varphi})(A^2_\alpha)=(I-T_{\overline\varphi}T_\varphi)(A^2_\alpha)=A^2_{\alpha-2}?$$
We show that, for $\alpha>-1$, the identities above hold if and only if $\varphi$ is a finite Blaschke product. The
special case $\alpha=0$ was proved in \cite{Zhu03}. Once again, the assumption $\alpha>-1$ is critical here.
\end{itemize}

With the definition of generalized Bergman spaces $A^2_\alpha$ deferred to the next section, we mention the
following special cases: $A^2_0=A^2$ is the ordinary Bergman space, $A^2_{-1}=H^2$ is the Hardy space,
and $A^2_{-2}=\HD$ is the Dirichlet space.

\section{Generalized Bergman spaces}

For any real number $\alpha$ we fix some non-negative integer $k$ such that $2k+\alpha>-1$ and let $A^2_\alpha$
denote the space of analytic functions $f$ on $\D$ such that
\begin{equation}
\intD(1-|z|^2)^{2k}|f^{(k)}(z)|^2\,dA_\alpha(z)<\infty,
\label{eq1}
\end{equation}
where
$$dA_\alpha(z)=(1-|z|^2)^\alpha\,dA(z).$$
Here $dA$ is the normalized area measure on $\D$. It is easy to see that the weighted area measure $dA_\alpha$ is
finite if and only if $\alpha>-1$, in which case we will normalize $dA_\alpha$ so that $A_\alpha(\D)=1$.

It is well known that the space $A^2_\alpha$ is independent of the choice of the integer $k$ used in
(\ref{eq1}). Two particular examples are worth mentioning: $A^2_{-1}=H^2$ and $A^2_{-2}=\HD$, the Hardy and
Dirichlet spaces, respectively. See \cite{ZZ08} for more information about the ``generalized weighted Bergman
spaces'' $A^p_\alpha$.

Each space $A^2_\alpha$ is a Hilbert space with a certain choice of inner product. For example, if $\alpha>-1$, we
can choose $k=0$ in (\ref{eq1}) and simply use the natural inner product in $L^2(\D, dA_\alpha)$ for $A^2_\alpha$:
$$\langle f,g\rangle=\intD f(z)\overline{g(z)}\,dA_\alpha(z).$$

More generally, for any $\alpha>-2$, it is easy to show that an analytic function
$f(z)=\sum_{n=0}^\infty a_nz^n$ belongs to $A^2_\alpha$ if and only if
$$\sum_{n=0}^\infty\frac{|a_n|^2}{(n+1)^{\alpha+1}}<\infty.$$
Since
$$\frac{n!}{\Gamma(n+2+\alpha)}\sim\frac1{(n+1)^{\alpha+1}}$$
as $n\to\infty$, we see that
$$\langle f,g\rangle=\sum_{n=0}^\infty\frac{n!\,\Gamma(2+\alpha)}{\Gamma(n+2+\alpha)}\,a_n\overline b_n,\qquad
f(z)=\sum_{n=0}^\infty a_nz^n,\quad g(z)=\sum_{n=0}^\infty b_nz^n.$$
defines an inner product on $A^2_\alpha$. With this inner product, the functions
$$e_n(z)=\sqrt{\frac{\Gamma(n+2+\alpha)}{n!\,\Gamma(2+\alpha)}}\,z^n,\qquad n\ge0,$$
form an orthonormal basis for $A^2_\alpha$, which yields the reproducing kernel of $A^2_\alpha$ as follows:
\begin{equation}
K(z,w)=\sum_{n=0}^\infty e_n(z)\overline{e_n(w)}=\sum_{n=0}^\infty\frac{\Gamma(n+2+\alpha)}{n!\,\Gamma(2+\alpha)}
\,(z\overline w)^n=\frac1{(1-z\overline w)^{2+\alpha}}.
\label{eq2}
\end{equation}

Although all spaces $A^2_\alpha$, when $\alpha>-2$, have the same type of reproducing kernel as given
in (\ref{eq2}), their multiplier algebras depend on $\alpha$ in a critical way. It is well known that $\HM(A^2_\alpha)
=H^\infty$ for $\alpha\ge-1$. When $\alpha<-1$, $\HM(A^2_\alpha)$ is a proper sub-algebra
of $H^\infty$.

We will consider the defect operators
$$D_\varphi = D_\varphi^\alpha = \left(I-T_\varphi T_\varphi^*\right)^{1/2},\qquad
D_{\overline\varphi} = D_{\overline\varphi}^\alpha=\left(I-T_\varphi^*T_\varphi\right)^{1/2},$$
and the associated operators
$$E_\varphi=E_\varphi^\alpha=I-T_\varphi T_\varphi^*,\qquad
E_{\overline\varphi}=E_{\overline\varphi}^\alpha=I-T_\varphi^*T_\varphi,$$
where $\varphi\in\HM_1(A^2_\alpha)$ and $T_\varphi: A^2_\alpha\to A^2_\alpha$ is
the (contractive) multiplication operator.

Recall that
$$H^\alpha(\varphi)=H(T_\varphi),\qquad H^\alpha(\overline\varphi)=H(T^*_\varphi),$$
which are the generalized sub-Bergman Hilbert spaces defined in the Introduction. For any $\alpha>-2$, just like
the unweighted case $\alpha=0$, $H^\alpha(\varphi)$ is a reproducing kernel Hilbert space whose kernel
function is given by
\begin{equation}
K^{\alpha,\varphi}(z,w)=K^{\alpha,\varphi}_w(z)=\frac{1-\varphi(z)\overline{\varphi(w)}}{(1-z\overline w)^{2+\alpha}}.
\label{eq3}
\end{equation}
Similarly, $H^\alpha(\overline\varphi)$ is a reproducing kernel Hilbert space whose kernel function is given by
\begin{equation}
K^{\alpha,\overline\varphi}(z,w)=K^{\alpha,\overline\varphi}_w(z)=\intD\frac{1-|\varphi(u)|^2}
{(1-z\overline u)^{2+\alpha}(1-u\overline w)^{2+\alpha}}\,dA_\alpha(u).
\label{eq4}
\end{equation}
The spaces $H^\alpha(\varphi)$ and $H^\alpha(\overline\varphi)$ have been studied by
several authors, mostly in the case $\alpha\ge0$. See \cite{Su06, Chu19} for example.
We will generalize several results in the literature to weighted Bergman spaces $A^2_\alpha$ with
$\alpha>-1$.

\section{Complete Nevanlinna-Pick kernels}

In this section, we will determine exactly when the reproducing kernel function $K_w^\varphi$ in (\ref{eq3})
is a complete Nevanlinna-Pick (CNP) kernel. The following definition is from Theorem 8.2 in \cite{AM02}.

\begin{definition}\label{1}
Suppose $K=K(z,w)=K_w(z)$ is an irreducible kernel function on a set $\Omega\,$. $K$ is called a CNP kernel if there is
an auxiliary Hilbert space $\HL\,$, a function $b: \Omega \rightarrow \HL\,$, and a nowhere vanishing function $\delta$
on $\Omega$ such that
$$K_w(z) = \frac{\delta(z) \overline{\delta(w)}}{1-\langle b(z), b(w)\rangle},\qquad z,w\in\Omega.$$
\end{definition}

If $K$ is a CNP kernel, the corresponding Hilbert space $\HH(K)$ with kernel $K$ is called a CNP space.
CNP spaces share many properties with the Hardy space $H^2$, and they have been studied extensively in
the literature; see e.g. \cite{AM00, AHMR17, AHMR18, AHMR19, AHMR21} and the references therein for
recent developments. In 2020, Chu \cite{Chu20} determined which de Branges-Rovnyak spaces (sub-Hardy spaces)
have CNP kernel. We will characterize which sub-Bergman spaces have CNP kernel.

The reproducing kernel for the Hardy space $H^2$ is
$$K^{H^2}_w(z) = \frac{1}{1-z\overline{w}}.$$
If $\varphi \in H^\infty_1$ is not a constant, we let
$$H(K^{H^2}\circ \varphi) = \{f\circ \varphi: f \in H^2\}.$$
Then
$$K^{H^2}\circ \varphi(z,w) = K^{H^2}(\varphi(z),\varphi(w)) = \frac{1}{1-\varphi(z)\overline{\varphi(w)}}$$
is a kernel function and $C_\varphi: H^2 \rightarrow H(K^{H^2}\circ \varphi)$ defined by $C_\varphi f = f \circ \varphi$
is a unitary; see (\cite[P 71]{PR16}).

Given $a\in \D$ we let
$$\varphi_a(z) = \frac{a-z}{1-\overline az}$$
denote the M\"{o}bius map that interchanges the points $0$ and $a$. If we take $a=\varphi(0)$ and define
$$\psi(z)=\varphi_a(\varphi(z)),\qquad g(z)=\frac{\sqrt{1-|a|^2}}{1-\overline a\varphi(z)},$$
then an easy calculation shows that
\begin{equation}\label{eq5}
K^{\alpha,\psi}_w(z)= g(z)\,\overline{g(w)}\,K^{\alpha,\varphi}_w(z).
\end{equation}
See e.g. \cite[P 18]{LGR21}. So $K^{\alpha,\varphi}_w(z)$ is a CNP kernel if and only if $K^{\alpha,\psi}_w(z)$
is a CNP kernel.

The following result can be obtained from \cite[Theorem 6.28]{PR16}.

\begin{lemma}\label{2}
Let $\HH_1$ and $\HH_2$ be reproducing kernel Hilbert spaces of functions on a set $\Omega$ with reproducing
kernel $K_1$ and $K_2$, respectively. Let $\HF$ %and $\HG$
be a Hilbert space and $\Phi: \Omega\rightarrow\HB(\HF,\C)$ be a function. Then the following are equivalent:
\begin{enumerate}
\item $\Phi$ is a contractive multiplier from $\HH_1\otimes \HF$ to $\HH_2$,
\item $K_2(z,w)-K_1(z,w)\Phi(z)\Phi(w)^*$ is positive definite.
\end{enumerate}
\end{lemma}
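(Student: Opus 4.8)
The plan is to collapse both conditions into the single statement $\|M_\Phi\|\le1$, where $M_\Phi$ is the multiplication operator $(M_\Phi f)(z)=\Phi(z)f(z)$ carrying $\HH_1\otimes\HF$ into $\HH_2$, and then to test contractivity against the reproducing kernels, which span dense subspaces of the respective spaces. Here I regard $\HH_1\otimes\HF$ as the reproducing kernel Hilbert space of $\HF$-valued functions with operator-valued kernel $K_1(z,w)I_\HF$, and I view each $\Phi(w)\in\HB(\HF,\C)$ so that its adjoint $\Phi(w)^*$ lives in $\HF$ under the identification $\HB(\C,\HF)\cong\HF$. The crux of the whole argument is a single computation: how $M_\Phi^*$ acts on the kernels $K_2(\cdot,w)$ of $\HH_2$. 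Combining the two reproducing identities gives, for every $f$,
$$\langle M_\Phi f,\,K_2(\cdot,w)\rangle_{\HH_2}=(M_\Phi f)(w)=\Phi(w)f(w)=\langle f,\,K_1(\cdot,w)\Phi(w)^*\rangle_{\HH_1\otimes\HF},$$
so that $M_\Phi^*K_2(\cdot,w)=K_1(\cdot,w)\Phi(w)^*$. Everything else is bookkeeping on finite linear combinations of kernels.

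For the direction $(1)\Rightarrow(2)$, I would assume $\|M_\Phi\|\le1$, equivalently $\|M_\Phi^*\|\le1$, and test the inequality $\langle (I-M_\Phi M_\Phi^*)g,g\rangle\ge0$ on $g=\sum_i c_i K_2(\cdot,w_i)$. Expanding the two inner products $\langle K_2(\cdot,w_i),K_2(\cdot,w_j)\rangle$ and $\langle M_\Phi^*K_2(\cdot,w_i),M_\Phi^*K_2(\cdot,w_j)\rangle$ via the adjoint formula above and the reproducing property, the inequality becomes
$$\sum_{i,j} c_i\overline{c_j}\left[K_2(w_j,w_i)-K_1(w_j,w_i)\,\Phi(w_j)\Phi(w_i)^*\right]\ge0,$$
which is exactly the positive definiteness of the kernel $K_2(z,w)-K_1(z,w)\Phi(z)\Phi(w)^*$.

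For the converse $(2)\Rightarrow(1)$, I would run this computation in reverse: define $V$ on the linear span of $\{K_2(\cdot,w)\}$ by $VK_2(\cdot,w)=K_1(\cdot,w)\Phi(w)^*$, and read hypothesis (2) as the statement $\|g\|^2-\|Vg\|^2\ge0$ on that span. This makes $V$ contractive on a dense subspace, hence extendable to a contraction on all of $\HH_2$; since $V$ agrees with $M_\Phi^*$ on the kernels, $M_\Phi$ is a well-defined contractive multiplier. I do not expect a genuine obstacle here, since this is the Paulsen--Raghupathi characterization; the one thing requiring real care, rather than difficulty, is the vector-valued setup: retaining the operator-valued kernel $K_1(z,w)I_\HF$, handling the identification $\HB(\C,\HF)\cong\HF$ when forming $\Phi(w)^*$, keeping the order of the arguments $w_i,w_j$ consistent throughout, and invoking density of the kernel functions to justify testing contractivity only on them.
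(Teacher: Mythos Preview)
Your argument is correct and is precisely the standard proof: compute $M_\Phi^*K_2(\cdot,w)=K_1(\cdot,w)\Phi(w)^*$, then test $I-M_\Phi M_\Phi^*\ge0$ on finite combinations of kernel vectors to get positivity of $K_2-K_1\Phi\Phi^*$, and run the computation backwards via density for the converse. The paper does not actually prove this lemma at all; it simply records that the statement can be extracted from \cite[Theorem 6.28]{PR16}, so you are in effect supplying the proof that the paper defers to that reference, and your sketch matches the argument given there.
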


We will use $\HM_1(\HH_1, \HH_2)$ to denote the set of contractive multipliers from $\HH_1$ to $\HH_2$.
When $\HH_1=\HH_2=\HH$, we will simplify the notation to $\HM_1(\HH)$.

\begin{lemma}\label{3}
Let $\varphi \in H^\infty_1$ be a non-constant function. Then
$$\HM_1(H(K^{H^2}\circ \varphi)) = \bigl\{f\circ \varphi: f \in \HM_1(H^2)\bigr\}.$$
\end{lemma}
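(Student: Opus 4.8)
The plan is to exploit the fact that the composition operator $C_\varphi\colon H^2\to H(K^{H^2}\circ\varphi)$, $C_\varphi h=h\circ\varphi$, is a unitary, and to transport multiplication operators across it. First I would record two elementary facts about $H(K^{H^2}\circ\varphi)$. Since $1\in H^2$ and $1\circ\varphi=1$, the constant function $1$ lies in $H(K^{H^2}\circ\varphi)$; and since $\varphi$ is non-constant, $\varphi(\D)$ is a nonempty open set, so by the identity theorem the representation of an element of $H(K^{H^2}\circ\varphi)$ as $h\circ\varphi$ with $h\in H^2$ is unique.

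For the inclusion $\{f\circ\varphi:f\in\HM_1(H^2)\}\subseteq\HM_1(H(K^{H^2}\circ\varphi))$, I would fix $f\in\HM_1(H^2)=H^\infty_1$ and set $\Psi=f\circ\varphi$. For any $h\in H^2$ one has $\Psi\cdot(h\circ\varphi)=(fh)\circ\varphi=C_\varphi M_f h$, which gives $M_\Psi C_\varphi=C_\varphi M_f$ and hence $M_\Psi=C_\varphi M_f C_\varphi^*$. Because $C_\varphi$ is unitary and $\|f\|_\infty\le1$ makes $M_f$ a contraction on $H^2$, the operator $M_\Psi$ is a contraction on $H(K^{H^2}\circ\varphi)$; that is, $\Psi\in\HM_1(H(K^{H^2}\circ\varphi))$.

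For the reverse inclusion, let $\Psi\in\HM_1(H(K^{H^2}\circ\varphi))$. Applying $M_\Psi$ to the constant $1$ shows $\Psi=\Psi\cdot1\in H(K^{H^2}\circ\varphi)$, so $\Psi=f\circ\varphi$ for a unique $f\in H^2$. The key step is to upgrade $f\in H^2$ to $f\in H^\infty_1$. For boundedness, I would observe that for every $h\in H^2$ the function $\Psi\cdot(h\circ\varphi)=(fh)\circ\varphi$ again lies in $H(K^{H^2}\circ\varphi)$, whence, by the uniqueness noted above, $fh\in H^2$. Thus $f$ multiplies $H^2$ into itself, and since the multiplier algebra of $H^2$ is $H^\infty$, we get $f\in H^\infty$. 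With $f\in H^\infty$ the intertwining identity $M_\Psi C_\varphi=C_\varphi M_f$ is legitimate, so $M_f=C_\varphi^*M_\Psi C_\varphi$ is a contraction; hence $\|f\|_\infty=\|M_f\|\le1$ and $f\in\HM_1(H^2)$.

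The only genuinely delicate point is the passage from $f\in H^2$ to $f\in H^\infty$: a priori a multiplier $\Psi$ need only carry the range space into itself, and one must rule out $f$ being unbounded. This is handled by the multiplier-into-$H^2$ argument together with the fact that $\HM(H^2)=H^\infty$; everything else is a routine transfer of contractivity through the unitary $C_\varphi$. One could alternatively argue entirely via Lemma \ref{2}, reducing to positive definiteness of $\bigl(1-f(\varphi(z))\overline{f(\varphi(w))}\bigr)\big/\bigl(1-\varphi(z)\overline{\varphi(w)}\bigr)$, but then one must promote positive definiteness on $\varphi(\D)$ to all of $\D$, which the unitary approach sidesteps.
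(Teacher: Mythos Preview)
Your argument is correct and is precisely the approach the paper indicates: the paper's proof consists of the single sentence ``This follows easily from the fact that $C_{\varphi}: H^2 \rightarrow H(K^{H^2}\circ \varphi)$ is a unitary,'' and you have supplied the natural details of that transfer, including the one nontrivial point (promoting $f\in H^2$ to $f\in H^\infty$ via the multiplier algebra of $H^2$).
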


\begin{proof}
This follows easily from the fact that $C_{\varphi}: H^2 \rightarrow H(K^{H^2}\circ \varphi)$ is a unitary.
\end{proof}

In what follows we will use the notation $K(z,w)\succeq 0$ or $0\preceq K(z,w)$ to mean that $K(z,w)$ is a
reproducing kernel, that is, $K(z,w)=\overline{K(w,z)}$ and it is positive-definite in the sense that
$$\sum_{i,j=1}^NP(z_i,z_j)c_i\overline c_j\ge0$$
for all $z_i\in\D$ and $c_i\in\C$, $1\le i\le N$, and $N\ge1$. We will begin with the following result for the
ordinary Bergman space, which illustrates the main techniques we use in this section.

\begin{theorem}\label{4}
Let $\varphi \in H^\infty_1$ and $\alpha=0$. Then $K^\varphi_w(z)=:K^{0,\varphi}_w(z)$ is a CNP kernel
if and only if $\varphi$ is a M\"{o}bius map.
\end{theorem}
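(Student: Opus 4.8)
The plan is to reduce to a positivity question for a single explicit kernel and then read off a necessary condition from one Taylor coefficient. First I would normalize. By (\ref{eq5}), $K^{0,\varphi}_w(z)$ is a CNP kernel if and only if $K^{0,\psi}_w(z)$ is, where $\psi=\varphi_a\circ\varphi$ with $a=\varphi(0)$; moreover $\varphi$ is a M\"{o}bius map if and only if $\psi$ is a M\"{o}bius map fixing $0$, i.e. a rotation $\psi(z)=\lambda z$ with $|\lambda|=1$. Hence I may assume from the start that $\varphi(0)=0$, in which case $K^{0,\varphi}_0(z)\equiv1$, so the kernel is normalized at the origin. For such a normalized, irreducible, nowhere-vanishing kernel, Definition \ref{1} (together with \cite{AM02}) gives the clean criterion that $K^{0,\varphi}$ is a CNP kernel if and only if
$$F_w(z):=1-\frac{1}{K^{0,\varphi}_w(z)}=1-\frac{(1-z\overline w)^2}{1-\varphi(z)\overline{\varphi(w)}}\ \succeq\ 0.$$
Indeed, if $K^{0,\varphi}$ is CNP then, after normalizing the representing map so that $b(0)=0$, the factor $\delta$ becomes constant and $F_w(z)=\langle b(z),b(w)\rangle\succeq0$ directly; conversely, writing $F_w(z)=\langle b(z),b(w)\rangle$ recovers the form in Definition \ref{1}.

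For the ``if'' direction this criterion is immediate: when $\psi(z)=\lambda z$ with $|\lambda|=1$ one computes $K^{0,\psi}_w(z)=(1-|\lambda|^2 z\overline w)/(1-z\overline w)^2=1/(1-z\overline w)$, the Szeg\H{o} kernel, which has the CNP form of Definition \ref{1} with $\HL=\C$ and $b(z)=z$. Transferring back through (\ref{eq5}) shows that every M\"{o}bius $\varphi$ yields a CNP kernel.

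The ``only if'' direction is where the real work lies, and the key is to expand $F$ and isolate a single coefficient. Expanding the geometric series $1/(1-\varphi(z)\overline{\varphi(w)})=\sum_{k\ge0}\varphi(z)^k\overline{\varphi(w)^k}$ and using $1-(1-z\overline w)^2=2z\overline w-z^2\overline w^2$ gives
$$F_w(z)=\bigl(2z\overline w-z^2\overline w^2\bigr)-(1-z\overline w)^2\sum_{k=1}^\infty \varphi(z)^k\,\overline{\varphi(w)^k}.$$
Writing $\varphi(z)=\sum_{n\ge1}c_nz^n$, I would note that $F$ is a power series of the form $\sum_{m,n\ge0}F_{m,n}z^m\overline w^n$, and that $F\succeq0$ forces the coefficient matrix $[F_{m,n}]$ to be positive semidefinite; in particular every diagonal entry $F_{n,n}$ must be nonnegative. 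Bookkeeping the contributions to the coefficient of $z^2\overline w^2$ — from $-z^2\overline w^2$, from $-(1-z\overline w)^2\varphi\overline\varphi$ (which contributes $-|c_2|^2+2|c_1|^2$), and from $-(1-z\overline w)^2(\varphi\overline\varphi)^2$ (which contributes $-|c_1|^4$) — yields
$$F_{2,2}=-1-|c_2|^2+2|c_1|^2-|c_1|^4=-\bigl(1-|c_1|^2\bigr)^2-|c_2|^2\ \le\ 0.$$
Positivity therefore demands $F_{2,2}=0$, i.e. $|c_1|=1$ and $c_2=0$. Since $\varphi(0)=0$ and $\|\varphi\|_\infty\le1$, the equality $|c_1|=|\varphi'(0)|=1$ forces $\varphi(z)=c_1z$ by the Schwarz lemma, so $\varphi$ is a rotation, hence a M\"{o}bius map. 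Undoing the normalization (\ref{eq5}) completes the proof.

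The main obstacle I anticipate is not the coefficient computation, which is elementary once organized by bidegree, but the careful justification of the reduction step: verifying that $K^{0,\varphi}$ is irreducible and nowhere vanishing on $\D\times\D$ (which holds because $|\varphi(z)\overline{\varphi(w)}|<1$ when $\varphi$ is a nonconstant self-map with $\varphi(0)=0$), and that the CNP representation of Definition \ref{1} may be normalized so that $\delta$ is constant and the criterion $F\succeq0$ applies cleanly. A minor point to pin down is the equivalence between positive definiteness of the kernel $F$ and positive semidefiniteness of its coefficient matrix $[F_{m,n}]$, which follows by testing the kernel on finite point sets and using a Vandermonde argument to realize arbitrary finitely supported coefficient vectors.
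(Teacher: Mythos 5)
Your proof is correct, but the hard direction runs along a genuinely different route from the paper's. Both arguments normalize via (\ref{eq5}) to $\varphi(0)=0$ and invoke the same criterion that a normalized irreducible kernel is CNP if and only if $F:=1-1/K\succeq0$. From there the paper converts positivity of $F$ into a contractive-multiplier statement via Lemma \ref{2} (namely $\Phi=(z/\sqrt2,\,\varphi(z)/(\sqrt2 z))$ multiplies $H(K^{H^2}\circ\varphi)\otimes\C^2$ contractively into $H(K^{H^2}\circ\varphi)$), transfers to $H^2$ through the composition unitary $C_\varphi$ (Lemma \ref{3}) to produce $h\in H^\infty_1$ with $z/\sqrt2=h(\varphi(z))$ and $z/(2h)\in H^\infty_1$, and then runs a boundary-value argument forcing $\sqrt2\,h$ to be inner, zero-free off the origin and without singular factor, hence a rotation; you instead expand $F$ as a power series in $z,\overline w$ and read off the single diagonal coefficient $F_{2,2}=-(1-|c_1|^2)^2-|c_2|^2\le0$, so positivity forces $|\varphi'(0)|=1$ and the Schwarz lemma finishes. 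Your bookkeeping is right (the $k\ge3$ terms of the Neumann series cannot reach bidegree $(2,2)$), and the fact you flag---that $F\succeq0$ as a kernel forces the coefficient matrix $[F_{m,n}]$ to be positive semidefinite---is standard and can be settled exactly as you suggest: test $F$ against point masses at roots of unity on a circle $|z|=r$, which isolates $F_{2,2}r^4\ge0$, or note that the functionals $f\mapsto f^{(n)}(0)/n!$ are bounded on the RKHS of a sesquiholomorphic kernel, making $[F_{m,n}]$ a Gram matrix. What your route buys is brevity and elementarity: no multiplier theory, no inner-function analysis. What the paper's heavier route buys is precisely what is needed afterwards: the multiplier/composition machinery is the part that survives when $(1-z\overline w)^{2+\alpha}$ is no longer a polynomial, and it is the engine of Theorems \ref{5} and \ref{6}. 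Indeed, your one-coefficient obstruction is special to $\alpha=0$: for $s=2+\alpha\in(1,2)$ the analogous coefficient is $F_{2,2}=-s(s-1)/2+s|c_1|^2-|c_2|^2-|c_1|^4$, which is strictly positive for the non-M\"obius function $\varphi(z)=\sqrt{s/2}\,z$, so no single Taylor coefficient can detect the failure of the CNP property in the weighted case.
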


\begin{proof}
If $\varphi$ is a M\"{o}bius map, say
$$\varphi = \zeta \frac{a-z}{1-\overline az},\qquad \zeta \in \T, a \in \D,$$
then it is easy to check that
$$K^\varphi_w(z) = \frac{1-|a|^2}{(1-\overline az)(1-a\overline{w})}\frac{1}{1-z\overline{w}},$$
which is clearly a CNP kernel.

Conversely, suppose $K^\varphi_w(z)$ is a CNP kernel. If $a = \varphi(0) \neq 0$, then we consider
$\psi(z)=\varphi_a(\varphi(z))$. By (\ref{eq5}), we have that $K^\psi_w(z)$ is a CNP kernel and
$\psi(0) = 0$. So we will assume that $\varphi$ also satisfies $\varphi(0) = 0$, which implies
$K^\varphi_0(z) = 1$ for all $z \in \D$.

It is well known that if a reproducing kernel function $K_w(z)=K(z,w)$ on $\D$ satisfies $K(z,0)=1$ for all $z\in\D$,
then it is a CNP kernel if and only if
$$1-\frac1{K(z,w)}\succeq 0.$$
See \cite{AM02} for example. Since
$$1 - \frac{1}{K^\varphi_w(z)} = 1 - \frac{(1-z\overline{w})^2}{1-\varphi(z)\overline{\varphi(w)}} =
\frac{2z\overline{w}-z^2\overline{w^2}-\varphi(z)\overline{\varphi(w)}}{1-\varphi(z)\overline{\varphi(w)}},$$
we have
$$\frac{1-\frac{z}{\sqrt{2}}\frac{\overline{w}}{\sqrt{2}}-\frac{\varphi(z)}{\sqrt{2}z}\frac{\overline{\varphi(w)}}{\sqrt{2}\overline{w}}}{1-\varphi(z)\overline{\varphi(w)}} \succeq 0.$$
It follows from this and Lemma \ref{2} that
\begin{equation}\label{eq6}
\Phi(z) = \left(\frac{z}{\sqrt{2}}, \frac{\varphi(z)}{\sqrt{2}z}\right) \in \HM_1\Bigl(H(K^{H^2}\circ \varphi) \otimes \C^2,
H(K^{H^2}\circ \varphi)\Bigr).
\end{equation}
Thus
$$\frac{z}{\sqrt{2}}\in \HM_1(H(K^{H^2}\circ \varphi)),\quad
\frac{\varphi(z)}{\sqrt{2}z} \in \HM_1(H(K^{H^2}\circ \varphi)).$$

Using $z/\sqrt{2} \in \HM_1(H(K^{H^2}\circ \varphi))$ and $1 \in H(K^{H^2}\circ \varphi)$, we can find
a function $h \in H^2$ such that
\begin{equation}\label{eq7}
\frac{z}{\sqrt{2}} = \frac{z}{\sqrt{2}} (1) = h(\varphi(z)),\quad z \in \D.
\end{equation}
Therefore $\varphi$ is injective, and by Lemma \ref{3}, $h \in \HM_1(H^2) = H^\infty_1$ and $h(0) = 0$.
Similarly, we deduce from $\varphi(z)/(\sqrt{2}\,z) \in \HM_1(H(K^{H^2}\circ \varphi))$ that $z/(2h) \in H^\infty_1$.
Then (\ref{eq6}) implies that
$$T: = \left(h, \frac{z}{2h}\right) \in \text{Mult}_1(H^2 \otimes \C^2, H^2).$$
Since
$$T^* \frac{1}{1-\overline{\lambda}z} = \left(\overline{h(\lambda)}, \overline{\frac{z}{2h}(\lambda)}\,\right)
\frac{1}{1-\overline{\lambda}z},$$
we conclude that
$$|h(\lambda)|^2 + \frac{|\lambda|^2}{4|h(\lambda)|^2} \leq 1, \qquad\lambda \in \D\setminus\{0\}.$$
Passing to boundary limits, we obtain
$$|h(\lambda)|^2 + \frac{1}{4|h(\lambda)|^2} \leq 1$$
for almost all $\lambda \in \T$. It follows that $|h(\lambda)| = \frac{1}{\sqrt{2}}$ for almost all $\lambda \in \T$.
Thus $\sqrt{2}h$ is an inner function. By Schwarz lemma, the inequality
$\sqrt2|h(z)|\le1$ together with $h(0)=0$ implies $\sqrt2|h(z)|\le|z|$ on $\D$.
This along with $z/(2h)\in H^\infty_1$ shows that
$$\frac1{\sqrt2}\le\left|\frac{\sqrt2\,h(z)}{z}\right|\le1,\qquad z\in\D,$$
which implies that the inner function $\sqrt2 h(z)/z$ has no zero inside $\D$ and has no singular factor. Therefore,
$\sqrt{2}h(z) = \zeta z$ for some $\zeta \in \T$. It then follows from (\ref{eq7}) that
$\varphi(z) = \overline{\zeta}z$, which finishes the proof of the theorem.
\end{proof}

The characterization of CNP kernels for the sub-$A^2_\alpha$ spaces $H^\alpha(\varphi)$ are more
subtle though. The results we obtain will depend on the range of the parameter $\alpha$.

\begin{theorem}\label{5}
Suppose $\varphi\in H^\infty_1$ and $-1<\alpha\le0$. Then the reproducing kernel of $H^\alpha(\varphi)$ in
(\ref{eq3}) is a CNP kernel if and only if $\varphi$ is a M\"{o}bius map.
\end{theorem}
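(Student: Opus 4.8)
The plan is to imitate the template of Theorem \ref{4}: after a M\"obius normalization one reads the scalar CNP criterion ``$1-1/K\succeq 0$'' as a contractive multiplier statement through Lemma \ref{2}, and then pushes the resulting multiplier back to the Hardy space via the unitary $C_\varphi$ of Lemma \ref{3}. Writing $s=2+\alpha\in(1,2]$, the essential new phenomenon for $\alpha<0$ is that the terminating polynomial $(1-z\overline w)^2$ of the $\alpha=0$ case is replaced by the full binomial series $(1-z\overline w)^{s}$, so the auxiliary multiplier is $\ell^2$-valued rather than $\C^2$-valued. For the easy direction, if $\varphi=\zeta\,(a-z)/(1-\overline a z)$ is M\"obius, a direct computation gives
$$K^{\alpha,\varphi}_w(z)=\frac{1-|a|^2}{(1-\overline a z)(1-a\overline w)}\cdot\frac{1}{(1-z\overline w)^{1+\alpha}} .$$
Since $0<1+\alpha\le 1$, the factor $(1-z\overline w)^{-(1+\alpha)}$ is a CNP kernel (its reciprocal $(1-z\overline w)^{1+\alpha}=1-\sum_{n\ge1}a_n(z\overline w)^n$ has $a_n\ge0$, so $1-1/K\succeq 0$), and multiplying by the nowhere-vanishing $\delta(z)\overline{\delta(w)}$ with $\delta(z)=\sqrt{1-|a|^2}/(1-\overline a z)$ preserves the CNP property. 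Hence $K^{\alpha,\varphi}_w$ is CNP.

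For the converse I would first discard the constant case ($\varphi\equiv 0$ gives $(1-z\overline w)^{-s}$, which is not CNP for $s>1$ and is not M\"obius), and then normalize via \eqref{eq5} so that $\varphi(0)=0$; this makes $K^{\alpha,\varphi}_0\equiv 1$, so $K^{\alpha,\varphi}$ is CNP if and only if $1-1/K^{\alpha,\varphi}\succeq 0$. Expanding $(1-z\overline w)^{s}=1-s\,z\overline w+\sum_{n\ge2}c_n(z\overline w)^n$ with $c_n=(-1)^n\binom{s}{n}$, the key sign fact is that $c_n\ge 0$ for all $n\ge 2$ precisely because $1<s\le 2$. With $\psi=\varphi/z$ (analytic, as $\varphi(0)=0$) and $g(z,w)=\sum_{m\ge1}c_{m+1}(z\overline w)^m\succeq0$, the numerator of $1-1/K^{\alpha,\varphi}$ factors as $z\overline w\,[\,s-\psi(z)\overline{\psi(w)}-g(z,w)\,]$. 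I would then invoke the elementary division principle that $z\overline w\,B(z,w)\succeq0$ with $B$ continuous forces $B\succeq0$ (replace scalars $c_i$ by $z_ic_i$ and use continuity at $0$). This gives
$$\frac{1}{s}\bigl(\psi(z)\overline{\psi(w)}+g(z,w)\bigr)=\langle\Phi(z),\Phi(w)\rangle,\qquad \Phi(z)=\tfrac{1}{\sqrt s}\bigl(\psi(z),\sqrt{c_2}\,z,\sqrt{c_3}\,z^2,\dots\bigr)\in\C\oplus\ell^2,$$
and by Lemma \ref{2}, $\Phi$ is a contractive multiplier of $H(K^{H^2}\circ\varphi)\otimes\HF$ into $H(K^{H^2}\circ\varphi)$, with $\HF=\C\oplus\ell^2$.

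The endgame transfers this to $H^2$ through $C_\varphi$. The component $\frac{\sqrt{c_2}}{\sqrt s}z\in\HM_1\bigl(H(K^{H^2}\circ\varphi)\bigr)$ applied to $1$ lies in the space, so $\frac{\sqrt{c_2}}{\sqrt s}z=h(\varphi(z))$; this forces $\varphi$ to be injective and, by Lemma \ref{3} and the vector-valued analogue of it, produces $\tilde\Phi\in\HM_1(H^2\otimes\HF,H^2)$ with $\tilde\Phi\circ\varphi=\Phi$. Setting $\tilde\rho$ to be the $H^\infty$-extension of $\varphi^{-1}$ (so $\tilde\rho(0)=0$), one identifies $\tilde\Phi_m=\frac{\sqrt{c_{m+1}}}{\sqrt s}\,\tilde\rho^{\,m}$ and $\tilde\Phi_0(w)=\frac{w}{\sqrt s\,\tilde\rho(w)}$; boundedness of $\tilde\Phi_0$ gives $z/\tilde\rho\in H^\infty$, and $\tilde\Phi(w)\in\ell^2$ forces $|\tilde\rho|\le1$ on $\D$. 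The crucial computation is that on the boundary the series telescopes: with $t=|\tilde\rho(\lambda)|$,
$$\|\tilde\Phi(\lambda)\|^2=\frac{1}{s}\Bigl(\frac{1}{t^2}+\sum_{m\ge1}c_{m+1}t^{2m}\Bigr)=\frac{(1-t^2)^{s}}{s\,t^2}+1,$$
so contractivity ($\|\tilde\Phi(\lambda)\|\le 1$ a.e.\ on $\T$) forces $t=1$ a.e., i.e.\ $\tilde\rho$ is inner. An inner function vanishing at $0$ and dividing $z$ in $H^\infty$ must equal $cz$ with $|c|=1$, whence $\varphi(z)=\overline c\,z$ is a rotation, hence M\"obius.

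I expect the main obstacle to be precisely this passage through the full $\ell^2$ series: at $\alpha=0$ the sum is finite and two components already force $|\tilde\rho|\equiv 2^{-1/2}$ on $\T$, but for $-1<\alpha<0$ any fixed finite truncation only yields the strict inequality $\|\,\cdot\,\|^2\le 1$ without pinning down $t$, and it is only the exact summation identity $\sum_{m\ge1}c_{m+1}t^{2m}=\frac{(1-t^2)^{s}-1}{t^2}+s$ together with the telescoping above that collapses the estimate to the single equality case $t=1$. Getting the bookkeeping of the binomial coefficients and the analytic extension of $\varphi^{-1}$ exactly right is the technical heart of the argument.
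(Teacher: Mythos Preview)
Your proposal is correct and follows essentially the same route as the paper: reduce to $\varphi(0)=0$ via \eqref{eq5}, read $1-1/K^{\alpha,\varphi}\succeq 0$ as a row-multiplier statement (Lemma~\ref{2}) with an $\ell^2$-valued symbol built from the binomial coefficients of $(1-z\overline w)^{s}$, transfer through the unitary $C_\varphi$ of Lemma~\ref{3} to a row multiplier on $H^2$, and use the boundary identity $\sum_{n\ge2}c_n t^{2(n-1)}+t^{-2}=s+\dfrac{(1-t^2)^s}{t^2}$ to force the inverse function to be inner and hence a rotation. Your $\tilde\rho$ is exactly the paper's $h$ (both satisfy $h(\varphi(z))=z$), and your pointwise bound $\|\tilde\Phi(\lambda)\|\le 1$ is the same inequality the paper extracts from $T^*k_\lambda$; the only cosmetic differences are that you make the ``divide by $z\overline w$'' step explicit and treat $\alpha=0$ simultaneously rather than as a separate theorem.
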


\begin{proof}
The case $\alpha=0$ concerns the ordinary Bergman space, which is Theorem~\ref{4}.
So we assume $-1<\alpha<0$ for the rest of this proof.

First assume that $\varphi$ is a M\"{o}bius map, say $\varphi(z)=\zeta\,\frac{a-z}{1-\overline az}$ with $\zeta\in\T$ and
$a\in\D$. Then an easy computation shows that the reproducing kernel for $H^\alpha(\varphi)$ can be written as
$$K(z,w)=\frac{1-|a|^2}{(1-\overline az)(1-a\overline w)}\,\frac1{(1-z\overline w)^{1+\alpha}},$$
which is known to be a CNP kernel. See \cite{AM02}.

Next we assume that the kernel for $H^\alpha(\varphi)$ in (\ref{eq3})
%$$K^{\alpha,\varphi}(z,w)=K^{\alpha,\varphi}_w(z)=\frac{1-\varphi(z)\overline{\varphi(w)}}{(1-z\overline w)^{2+\alpha}},$$
is a CNP kernel. Once again, by considering $\psi(z)=\varphi_a\circ\varphi(z)$ with $a=\varphi(0)$ and
using (\ref{eq5}), we may assume that $\varphi(0)=0$.

When $\varphi(0)=0$, we have $K^{\alpha,\varphi}_0(z)=1$ for all $z\in\D$. In this case, it is known that the kernel
$K^{\alpha,\varphi}_w(z)$ is CNP if and only if $1-[1/K^{\alpha,\varphi}_w(z)]\succeq0$; see \cite{AM02} for example.
Since
\begin{align*}
1&-\frac1{K^{\alpha,\varphi}_w(z)}=1-\frac{(1-z\overline w)^{2+\alpha}}{1-\varphi(z)\overline{\varphi(w)}}\\
&=\left[sz\overline w-\sum_{n=2}^\infty\frac{s(s-1)\Gamma(n-s)}{n!\,\Gamma(2-s)}\,
z^n\overline w^n-\varphi(z)\overline{\varphi(w)}\,\right]\,\frac1{1-\varphi(z)\overline{\varphi(w)}},
\end{align*}
where $s=\alpha+2\in(1,2)$, we must have
$$\left[1-\sum_{n=2}^\infty\frac{(s-1)\,\Gamma(n-s)}{n!\,\Gamma(2-s)}\,z^{n-1}\overline w^{n-1}-
\frac{\varphi(z)}{\sqrt s z}\,\frac{\overline{\varphi(w)}}{\sqrt s\overline w}\right]\,\frac1{1-\varphi(z)\overline{\varphi(w)}}
\succeq 0.$$

Let
$$\Phi(z)=\left(\frac{\varphi(z)}{\sqrt sz},\sqrt{\frac{s-1}{2!}}\,z,\cdots,\sqrt{\frac{(s-1)\Gamma(n-s)}{n!\,\Gamma(2-s)}}\,
z^{n-1},\cdots\right).$$
By Lemma \ref{2}, we have
\begin{equation}
\Phi\in\HM_1\Bigl(H(K^{H^2}\circ\varphi)\otimes l^2, H(K^{H^2}\circ\varphi)\Bigr).
\label{eq8}
\end{equation}
Thus
$$\frac{\varphi(z)}{\sqrt sz},\quad\sqrt{\frac{(s-1)\Gamma(n-s)}{n!\,\Gamma(2-s)}}\,z^{n-1}\in
\HM_1\Bigl(H(K^{H^2}\circ\varphi)\Bigr),\qquad n\ge2.$$
It follows from
$$\sqrt{\frac{s-1}{2!}}\,z\in\HM_1\Bigl(H(K^{H^2}\circ\varphi\Bigr),\quad 1\in H(K^{H^2}\circ\varphi),$$
that there exists some function $h\in H^2$ such that
\begin{equation}
\sqrt{\frac{s-1}{2!}}\,z=\sqrt{\frac{s-1}{2!}}\,h(\varphi(z)),\qquad z\in\D.
\label{eq9}
\end{equation}
Therefore, $\varphi$ is injective, and by Lemma \ref{3},
$$\sqrt{\frac{s-1}{2!}}\,h\in\HM_1(H^2)=H^\infty_1$$
with $h(0)=0$. Then we also have
$$\sqrt{\frac{(s-1)\Gamma(n-s)}{n!\,\Gamma(2-s)}}\,z^{n-1}=
\sqrt{\frac{(s-1)\Gamma(n-s)}{n!\,\Gamma(2-s)}}\,h(\varphi(z))^{n-1},\qquad n\ge2.$$
Similarly, from $\varphi(z)/\sqrt sz\in\HM_1(H(K^{H^2}\circ\varphi))$ we obtain $z/\sqrt sh\in H^\infty_1$.

By (\ref{eq8}), we must have
\begin{align*}
T(z)&:=\left(\frac z{\sqrt sh},\sqrt{\frac{s-1}{2!}}\,h,\cdots,\sqrt{\frac{s-1)\Gamma(n-s)}{n!\,\Gamma(2-s)}}\,h^{n-1},
\cdots\right)\\
&\in\HM_1(H^2\otimes l^2, H^2).
\end{align*}
Note that
\begin{align*}
&T^*\frac1{1-\overline\lambda z}=\\
&\quad\left(\overline{\frac z{\sqrt sh}(\lambda)},\sqrt{\frac{s-1}{2!}}\,\overline{h(\lambda)},\cdots,
\sqrt{\frac{(s-1)\Gamma(n-s)}{n!\,\Gamma(2-s)}}\,\overline{h^{n-1}(\lambda)}\,\right)\,\frac1{1-\overline\lambda z}.
\end{align*}
It follows that
$$\frac{|\lambda|^2}{s|h(\lambda)|^2}+\sum_{n=2}^\infty\frac{(s-1)\Gamma(n-2)}{n!\,\Gamma(2-s)}
\,|h(\lambda)|^{2n-2}\le1,\qquad\lambda\in\D\setminus\{0\}.$$
Passing to radial limits, we obtain
$$\frac1{s|h(\lambda)|^2}+\sum_{n=2}^\infty\frac{(s-1)\Gamma(n-s)}{n!\,\Gamma(2-s)}\,|h(\lambda)|^2\le1$$
or
$$1+\sum_{n=2}^\infty\frac{s(s-1)\Gamma(n-s)}{n!\,\Gamma(2-s)}\,|h(\lambda)|^{2n}\le s|h(\lambda)|^2$$
for almost all $\lambda\in\T$. We necessarily have $|h(\lambda)|^2\le1$. Comparing the above inequality with the
classical Taylor series
$$(1-x)^s=1-sx+\sum_{n=2}^\infty\frac{s(s-1)\Gamma(n-s)}{n!\,\Gamma(2-s)}\,x^n,\qquad x\in(-1,1),$$
we obtain $(1-|h(\lambda)|^2)^s\le0$ for almost all $\lambda\in\T$, so $h$ is an inner function. This together with
$z/\sqrt sh\in H^\infty_1$ implies that $h(z)=\zeta z$ for some constant $\zeta\in\T$. By (\ref{eq9}), we have
$\varphi(z)=\overline\zeta\,z$. This completes the proof of the theorem.
\end{proof}

Note that, in the case when $\alpha=-1$, a characterization for $\varphi\in H^\infty_1$ was obtained in \cite{Chu20}
in order for the kernel
$$K(z,w)=\frac{1-\varphi(z)\overline{\varphi(w)}}{(1-z\overline w)^{2+\alpha}}=
\frac{1-\varphi(z)\overline{\varphi(w)}}{1-z\overline w}$$
to be CNP. The necessary and sufficient condition for $\varphi$ is the following: there exists a function
$h\in H^\infty_1$ such that $\psi(z)=zh(\psi(z))$, where $\psi(z)=\varphi_a(\varphi(z))$ with $a=\varphi(0)$.

When $-2<\alpha<-1$, we have the following result.

\begin{theorem}\label{6}
Suppose $-2<\alpha<-1$ and $\varphi\in\HM_1(A^2_\alpha)$. Let $a=\varphi(0)$ and $\psi=\varphi_a\circ\varphi$.
Then the function
$$K^{\alpha,\varphi}_w(z)=\frac{1-\varphi(z)\overline{\varphi(w)}}{(1-z\overline w)^{2+\alpha}}$$
is a CNP kernel if and only if there exists
$$h=(h_1, h_2,\cdots,h_n,\cdots)\in\HM_1(H^2, H^2\otimes l^2)$$
such that
$$\psi(z)=\sum_{n=1}^\infty\sqrt{\frac{(2+\alpha)\Gamma(n-\alpha-2)}{n!\,\Gamma(-1-\alpha)}}\,z^nh_n(\psi(z))$$
on $\D$.
\end{theorem}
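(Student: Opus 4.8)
The plan is to follow the template of Theorem~\ref{5} as far as it goes and then diverge, since the whole point is that the sign pattern of the binomial coefficients changes once $s:=\alpha+2$ drops below $1$. As in the earlier proofs, by (\ref{eq5}) it suffices to treat $\psi=\varphi_a\circ\varphi$ in place of $\varphi$, so I may assume $\psi(0)=0$; then $K^{\alpha,\psi}_0\equiv1$ and the criterion from \cite{AM02} applies: $K^{\alpha,\psi}_w(z)$ is CNP if and only if $1-1/K^{\alpha,\psi}_w(z)\succeq0$. Expanding $(1-z\overline w)^s$ by the same binomial series as in Theorem~\ref{5}, but now with $s\in(0,1)$, every coefficient $\tfrac{s\,\Gamma(n-s)}{n!\,\Gamma(1-s)}$ is \emph{positive}, so all power-series terms stay on one side and I obtain
\[
1-\frac1{K^{\alpha,\psi}_w(z)}=\frac{\langle c(z),c(w)\rangle-\psi(z)\overline{\psi(w)}}{1-\psi(z)\overline{\psi(w)}},\qquad
c(z)=\Bigl(\sqrt{\tfrac{(2+\alpha)\Gamma(n-\alpha-2)}{n!\,\Gamma(-1-\alpha)}}\;z^n\Bigr)_{n\ge1}\in l^2,
\]
where a short computation with the series for $(1-x)^s$ gives $\langle c(z),c(w)\rangle=1-(1-z\overline w)^s$. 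This is the structural difference from Theorem~\ref{5}: there the numerator had the form $1-\langle\Phi,\Phi\rangle$, forcing a single contractive multiplier and hence rigidity (M\"obius maps), whereas here it is $\langle c,c\rangle-\psi\overline\psi$, which instead calls for a \emph{factorization} of $\psi$ through the fixed symbol $c$ and produces the functional equation and a whole family of solutions.

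For the direction ``functional equation $\Rightarrow$ CNP'' I would start from a given $h=(h_n)\in\HM_1(H^2,H^2\otimes l^2)$ satisfying the stated identity and set $g_n=h_n\circ\psi$. By the vector-valued form of Lemma~\ref{3} (again a consequence of the unitarity of $C_\psi$), $g=(g_n)\in\HM_1\bigl(H(K^{H^2}\circ\psi),H(K^{H^2}\circ\psi)\otimes l^2\bigr)$, so by the operator-valued version of Lemma~\ref{2} (\cite[Theorem 6.28]{PR16}) the $B(l^2)$-valued kernel $k(z,w)\bigl(I-g(z)g(w)^*\bigr)\succeq0$, where $k=K^{H^2}\circ\psi$. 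The functional equation reads $\psi(z)=c(z)g(z)$, so
\[
\langle c(z),c(w)\rangle-\psi(z)\overline{\psi(w)}=c(z)\bigl(I-g(z)g(w)^*\bigr)c(w)^*.
\]
Compressing the positive operator kernel $k(z,w)\bigl(I-g(z)g(w)^*\bigr)$ by the vectors $c(z)^*\in l^2$ (legitimate since $\|c(z)\|^2=1-(1-|z|^2)^s<\infty$) yields $1-1/K^{\alpha,\psi}\succeq0$, hence the kernel is CNP.

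The harder direction, ``CNP $\Rightarrow$ functional equation'', is where I expect the real work. I would \emph{not} appeal to Leech's theorem directly, because $c$ is generally not a multiplier of $H(K^{H^2}\circ\psi)$ (indeed $1/K^{\alpha,\psi}=(1-z\overline w)^s/(1-\psi\overline\psi)$ need not be positive definite, as the case $\psi=0$ already shows), so the standard corona/Leech hypotheses fail. Instead I would run a lurking-isometry argument. Writing the positive kernel as $\langle E(z),E(w)\rangle$ by Kolmogorov factorization and clearing the denominator gives the Gram identity
\[
\bigl\langle(E(z),\psi(z)),(E(w),\psi(w))\bigr\rangle
=\bigl\langle(c(z),\psi(z)E(z)),(c(w),\psi(w)E(w))\bigr\rangle,
\]
which defines an isometry on the corresponding spans; I extend it to a contraction $V=\left(\begin{smallmatrix}\alpha&\beta\\ \gamma&\delta\end{smallmatrix}\right):l^2\oplus\HK\to\HK\oplus\C$. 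Reading off the two component equations gives $(I-\psi(z)\beta)E(z)=\alpha\,c(z)$ and then
\[
\psi(z)=\bigl[\gamma+\psi(z)\,\delta(I-\psi(z)\beta)^{-1}\alpha\bigr]c(z)=\Xi(\psi(z))\,c(z),\qquad
\Xi(\lambda)=\gamma+\lambda\,\delta(I-\lambda\beta)^{-1}\alpha .
\]
By the transfer-function realization theorem for the Schur class (cf.\ \cite{PR16}), the contractivity of $V$ makes $\Xi=(\Xi_n)$ a contractive multiplier, so $h:=\Xi\in\HM_1(H^2,H^2\otimes l^2)$, and the last display is exactly $\psi(z)=\sum_n\sqrt{\tfrac{(2+\alpha)\Gamma(n-\alpha-2)}{n!\,\Gamma(-1-\alpha)}}\,z^n h_n(\psi(z))$.

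The main obstacle is precisely this last direction: setting up the Gram identity with the correct block arrangement, extending the span-isometry to a genuine contraction $V$ (enlarging $\HK$ as needed), and verifying that the resulting transfer function $\Xi$ is analytic and contractive on $\D$ so that $(I-\lambda\beta)^{-1}=\sum_{k\ge0}\lambda^k\beta^k$ converges for $|\lambda|<1$ and $h\in\HM_1(H^2,H^2\otimes l^2)$. The routine but essential bookkeeping is the binomial computation in the first paragraph, which must land the positive coefficients on the $\langle c,c\rangle$ side with real square roots; everything downstream depends on that identity.
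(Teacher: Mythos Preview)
Your argument is correct, but it is considerably longer than the paper's, and the reason is a misconception in your plan. You write that you would \emph{not} appeal to Leech's theorem because $c$ need not be a multiplier of $H(K^{H^2}\!\circ\psi)$; but the version of Leech's theorem relevant here---Theorem~8.57 of \cite{AM02}---requires only that the ambient kernel be CNP, not that the ``outer'' symbol $c$ be a multiplier. Since $K^{H^2}\!\circ\psi=1/(1-\psi(z)\overline{\psi(w)})$ is manifestly CNP, the paper simply quotes that theorem: positivity of
\[
\frac{\langle c(z),c(w)\rangle-\psi(z)\overline{\psi(w)}}{1-\psi(z)\overline{\psi(w)}}
\]
is \emph{equivalent} to the existence of a contractive column multiplier $\Phi=(\varphi_n)\in\HM_1\bigl(H(K^{H^2}\!\circ\psi),\,H(K^{H^2}\!\circ\psi)\otimes l^2\bigr)$ with $\psi=c\,\Phi$, and then Lemma~\ref{3} (in its vector form, exactly as you use it) converts $\Phi$ into $h=(h_n)\in\HM_1(H^2,H^2\otimes l^2)$ with $\varphi_n=h_n\circ\psi$. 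That is the whole proof. Your lurking-isometry/transfer-function argument is precisely the standard proof of this instance of Theorem~8.57, so you have in effect reproved the black box rather than invoked it. The upside of your route is that it is self-contained (no appeal to the somewhat heavy machinery of \cite{AM02}) and makes the row/column bookkeeping explicit; the upside of the paper's route is brevity and that both implications come out at once from a single citation. One small remark on your sufficiency direction: your compression step uses the $B(l^2)$-valued kernel $k(z,w)\bigl(I_{l^2}-g(z)g(w)^*\bigr)\succeq0$, which is indeed the correct kernel criterion for a \emph{column} contractive multiplier $g\colon H(K^{H^2}\!\circ\psi)\to H(K^{H^2}\!\circ\psi)\otimes l^2$; it may be worth saying explicitly that this is the operator-valued form of Lemma~\ref{2} with $\HF=\C$ and target $\HH_2\otimes l^2$, since Lemma~\ref{2} as stated covers the row case.
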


\begin{proof}
Recall from (\ref{eq5}) that $K^{\alpha,\varphi}_w(z)$ is a CNP kernel if and only if $K^{\alpha,\psi}_w(z)$ is
a CNP kernel. So we will assume that $\varphi(0)=0$. In this case, we have $K^{\alpha,\varphi}_0(z)=1$ for all
$z\in\D$ and $1-[1/K^{\alpha,\varphi}_w(z)]\succeq 0$.

Let $s=\alpha+2$ and write
\begin{align*}
1-\frac1{K^{\alpha,\varphi}_w(z)}&=1-\frac{(1-z\overline w)^s}{1-\varphi(z)\overline{\varphi(w)}}\\
&=\left(\sum_{n=1}^\infty\frac{s\Gamma(n-s)}{n!\,\Gamma(1-s)}\,z^n\overline w^n-\varphi(z)\overline{\varphi(w)}\right)
\,\frac1{1-\varphi(z)\overline{\varphi(w)}}.
\end{align*}
Since $1/(1-\varphi(z)\overline{\varphi(w)})$ is a CNP kernel, it follows
from Theorem 8.57 of \cite{AM02} that $1-[1/K^{\alpha,\varphi}_w(z)]\succeq0$ if and only if there exists
$$\Phi=(\varphi_n)\in\HM_1\Bigl(H(K^{H^2}\circ\varphi), H(K^{H^2}\circ\varphi)\otimes l^2\Bigr)$$
such that
$$\varphi(z)=\sum_{n=1}^\infty\sqrt{\frac{s\Gamma(n-s)}{n!\,\Gamma(1-s)}}\,z^n\varphi_n(z).$$
By Lemma~\ref{3}, there exist $h=(h_n)\subset H^\infty_1$ such that $\varphi_n(z)=h_n(\varphi(z))$
for all $n$ and $h\in\text{Mult}_1(H^2, H^2\otimes l^2)$. This proves the desired result.
\end{proof}

For an example of a CNP kernel $K^{\alpha,\varphi}_w(z)$ when $-2<\alpha<-1$, fix any positive integer $n$ and
consider
$$\varphi(z)=\sqrt{\frac{(2+\alpha)\Gamma(n-2-\alpha)}{n!\,\Gamma(-1-\alpha)}}\,z^n.$$
It is easy to see that $\varphi\in\HM_1(A^2_\alpha)$ and, by the theorem above, $K^{\alpha,\varphi}_w(z)$
is a CNP kernel.

The case $\alpha>0$ remains unsettled. In this case, the identity function $\varphi(z)=z$ belongs to $H^\infty_1=
\HM_1(A^2_\alpha)$, but
$$K^{\alpha,\varphi}_w(z)=\frac{1-z\overline w}{(1-z\overline w)^{2+\alpha}}=\frac1{(1-z\overline w)^{1+\alpha}}$$
is NOT a CNP kernel; see \cite{AM02}. We do not know if there exists {\it any} $\varphi\in H^\infty_1$ such that
$K^{\alpha,\varphi}_w(z)$ is a CNP kernel.

\section{Compactness of defect operators}

In this section we will characterize functions $\varphi\in H^\infty_1$ such that the defect operators $D^\alpha_\varphi$
and $D^\alpha_{\overline\varphi}$, where $\alpha>-1$, are compact. The following result follows from I-9 of \cite{Sa94}.

\begin{lemma}\label{8}
Let $\alpha > -1$, $\varphi\in H^\infty_1$, and $M^\alpha(\varphi) = \varphi A^2_\alpha$. Then
$$H^\alpha(\varphi) \cap M^\alpha(\varphi) = \varphi H^\alpha(\overline{\varphi}).$$
\end{lemma}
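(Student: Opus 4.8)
The plan is to prove the identity at the level of abstract Hilbert-space operators. Write $T=T_\varphi:A^2_\alpha\to A^2_\alpha$, a contraction, and keep the notation $D_\varphi=(I-T_\varphi T_\varphi^*)^{1/2}$ and $D_{\overline\varphi}=(I-T_\varphi^*T_\varphi)^{1/2}$. By definition $H^\alpha(\varphi)=\ran D_\varphi$, $H^\alpha(\overline\varphi)=\ran D_{\overline\varphi}$, and $M^\alpha(\varphi)=\ran T_\varphi$, so the assertion is the purely operator-theoretic statement
$$\ran D_\varphi\cap\ran T_\varphi=T_\varphi\bigl(\ran D_{\overline\varphi}\bigr).$$
The only structural input I need is the pair of intertwining relations $T_\varphi D_{\overline\varphi}=D_\varphi T_\varphi$ and its adjoint $T_\varphi^* D_\varphi=D_{\overline\varphi}T_\varphi^*$. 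I would record these first: from $T_\varphi(I-T_\varphi^*T_\varphi)=(I-T_\varphi T_\varphi^*)T_\varphi$ one gets $T_\varphi D_{\overline\varphi}^2=D_\varphi^2 T_\varphi$, an induction then gives $T_\varphi\,p(D_{\overline\varphi}^2)=p(D_\varphi^2)\,T_\varphi$ for every polynomial $p$, and approximating $\sqrt{\,\cdot\,}$ uniformly on the spectrum yields $T_\varphi D_{\overline\varphi}=D_\varphi T_\varphi$; taking adjoints gives the second relation. (The degenerate case $\varphi\equiv0$ is trivial and can be excluded.)

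The forward inclusion is then immediate. For $f\in H^\alpha(\overline\varphi)=\ran D_{\overline\varphi}$, write $f=D_{\overline\varphi}y$; then $\varphi f=T_\varphi D_{\overline\varphi}y=D_\varphi(T_\varphi y)\in\ran D_\varphi$, while trivially $\varphi f\in\ran T_\varphi$. Hence $\varphi H^\alpha(\overline\varphi)\subseteq H^\alpha(\varphi)\cap M^\alpha(\varphi)$.

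For the reverse inclusion I take $g\in\ran D_\varphi\cap\ran T_\varphi$ and fix $h,\xi$ with $g=T_\varphi h$ and $g=D_\varphi\xi$. The crux is to exhibit $h$ itself as an element of $\ran D_{\overline\varphi}$. Using $T_\varphi^*T_\varphi=I-D_{\overline\varphi}^2$ I obtain
$$h=T_\varphi^*g+D_{\overline\varphi}^2 h,$$
and using the adjoint intertwining relation together with $g=D_\varphi\xi$,
$$T_\varphi^*g=T_\varphi^*D_\varphi\xi=D_{\overline\varphi}\bigl(T_\varphi^*\xi\bigr).$$
Combining the two displays gives $h=D_{\overline\varphi}\bigl(T_\varphi^*\xi+D_{\overline\varphi}h\bigr)\in\ran D_{\overline\varphi}=H^\alpha(\overline\varphi)$, whence $g=T_\varphi h=\varphi h\in\varphi H^\alpha(\overline\varphi)$. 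This proves $H^\alpha(\varphi)\cap M^\alpha(\varphi)\subseteq\varphi H^\alpha(\overline\varphi)$ and hence the lemma.

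I anticipate that the only delicate point is the passage from the polynomial intertwining identity $T_\varphi D_{\overline\varphi}^2=D_\varphi^2 T_\varphi$ to the square-root version $T_\varphi D_{\overline\varphi}=D_\varphi T_\varphi$; this is routine via the continuous functional calculus but should be stated carefully. Everything else is a short algebraic manipulation: notably, no appeal to reproducing kernels or to Douglas's range theorem is needed, and the argument is exactly the abstract form of Sarason's I-9, valid on $A^2_\alpha$ for any $\alpha>-1$ verbatim.
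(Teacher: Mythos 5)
Your proof is correct: the lemma is indeed the purely operator-theoretic statement $\ran D_\varphi\cap\ran T_\varphi=T_\varphi\bigl(\ran D_{\overline\varphi}\bigr)$ for a contraction $T_\varphi$, and your intertwining-plus-algebra argument (including the key observation that \emph{any} preimage $h$ of $g$ under $T_\varphi$ lands in $\ran D_{\overline\varphi}$) is essentially the proof of I-9 of \cite{Sa94}, which is precisely what the paper cites for this lemma without reproducing the argument. The only difference is that your version is self-contained, spelling out the functional-calculus derivation of $T_\varphi D_{\overline\varphi}=D_\varphi T_\varphi$ that Sarason's text also relies on.
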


The following result was proved in \cite{Su06} for $\alpha\ge0$.
We now show that the assumption $\alpha \geq 0$ can be relaxed to $\alpha > -1$.

\begin{lemma}\label{9}
Let $\alpha > -1$ and $\varphi\in H^\infty_1$. If $\varphi$ is a finite Blaschke product, then
$$H^\alpha(\overline\varphi) = H^\alpha(\varphi) = A^2_{\alpha-1}.$$
\end{lemma}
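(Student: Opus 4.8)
The plan is to prove Lemma~\ref{9} by computing the reproducing kernels of the three spaces and showing they coincide, since equality of reproducing kernels forces equality of the spaces as Hilbert spaces of functions. When $\varphi$ is a finite Blaschke product, $|\varphi|=1$ on $\bD$, so the defect $1-|\varphi(u)|^2$ vanishes on the boundary and the operators $T_\varphi$, $T_\varphi^*$ become, up to lower-order compact/smoothing terms, near-isometric. The heuristic is that the factor $1-\varphi(z)\overline{\varphi(w)}$ in $K^{\alpha,\varphi}_w(z)$ should behave like $(1-z\overline w)$ near the boundary diagonal, which is exactly the shift from $A^2_\alpha$ to $A^2_{\alpha-1}$ (recall the kernel of $A^2_\alpha$ is $(1-z\overline w)^{-(2+\alpha)}$ and that of $A^2_{\alpha-1}$ is $(1-z\overline w)^{-(1+\alpha)}$).

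First I would establish $H^\alpha(\varphi)=A^2_{\alpha-1}$ by a norm-equivalence argument. The norm in $H^\alpha(\varphi)$ is $\|D_\varphi f\|$-based, and the key identity is $\|D_\varphi f\|^2_{A^2_\alpha}=\|f\|^2_{A^2_\alpha}-\|T_\varphi^* f\|^2_{A^2_\alpha}$. For a finite Blaschke product of degree $N$, I would use the structure of $T_\varphi^*=T_{\overline\varphi}$ together with boundary behavior: since $|\varphi|=1$ a.e.\ on $\bD$, one expects $E_\varphi=I-T_\varphi T_\varphi^*$ to act like multiplication by $1-|\varphi|^2$ weighted against the Bergman kernel, which is comparable to multiplication by $(1-|z|^2)$ near the boundary. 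Concretely, I would show that $E_\varphi$ (and $E_{\overline\varphi}$) is, modulo bounded invertible and compact corrections, comparable to the operator implementing the weight change from $dA_\alpha$ to $dA_{\alpha-1}$, so that the range of $D_\varphi=E_\varphi^{1/2}$ coincides with $A^2_{\alpha-1}$ with equivalent norms. A clean route uses the explicit kernel \eqref{eq3}: I would verify directly that the reproducing kernel $K^{\alpha,\varphi}_w(z)=(1-\varphi(z)\overline{\varphi(w)})(1-z\overline w)^{-(2+\alpha)}$ generates the same space as the $A^2_{\alpha-1}$ kernel $(1-z\overline w)^{-(1+\alpha)}$, by exhibiting two-sided domination of the associated kernels: that is, showing both $c\,K_{A^2_{\alpha-1}}\preceq K^{\alpha,\varphi}\preceq C\,K_{A^2_{\alpha-1}}$ for positive constants, which forces the spaces to be equal with equivalent norms (this is the standard ``equivalent kernels give equal spaces'' principle). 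The domination reduces to showing $1-\varphi(z)\overline{\varphi(w)}$ and $1-z\overline w$ are comparable as kernels, i.e.\ each of $(1-\varphi(z)\overline{\varphi(w)})-c(1-z\overline w)$ and its reverse is positive-definite, which for a finite Blaschke product follows from factoring $\varphi=\prod\varphi_{a_j}$ and an induction on degree.

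For the identity $H^\alpha(\overline\varphi)=A^2_{\alpha-1}$, I would work with the kernel \eqref{eq4}, namely
$$K^{\alpha,\overline\varphi}_w(z)=\intD\frac{1-|\varphi(u)|^2}{(1-z\overline u)^{2+\alpha}(1-u\overline w)^{2+\alpha}}\,dA_\alpha(u),$$
and again compare it to $(1-z\overline w)^{-(1+\alpha)}$. The essential point is that the measure $d\mu(u)=(1-|\varphi(u)|^2)\,dA_\alpha(u)$ is, for a finite Blaschke product, a weight comparable near $\bD$ to $(1-|u|^2)\,dA_\alpha(u)=dA_{\alpha+1}(u)$, because $1-|\varphi|^2$ vanishes to first order at the boundary exactly where $1-|u|^2$ does. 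Substituting $dA_{\alpha+1}$ into the integral and using the standard reproducing formula $\intD (1-z\overline u)^{-(2+\alpha)}(1-u\overline w)^{-(2+\alpha)}\,dA_{\alpha+1}(u)\asymp (1-z\overline w)^{-(1+\alpha)}$ yields the desired comparability. Alternatively, and perhaps more cleanly, I would invoke Lemma~\ref{8}: once $H^\alpha(\varphi)=A^2_{\alpha-1}$ is known, the relation $H^\alpha(\varphi)\cap M^\alpha(\varphi)=\varphi H^\alpha(\overline\varphi)$ converts the $\overline\varphi$ statement into a statement about $A^2_{\alpha-1}\cap \varphi A^2_\alpha$, and the finiteness of the Blaschke degree controls the multiplier $\varphi$ well enough to extract $H^\alpha(\overline\varphi)=A^2_{\alpha-1}$.

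The main obstacle I anticipate is making the boundary comparison $1-|\varphi(u)|^2 \asymp 1-|u|^2$ rigorous and uniform enough to transfer between weighted Bergman scales, especially since the zeros of $\varphi$ in the interior can make $1-|\varphi|^2$ large in the interior while the comparison is only genuinely tight near $\bD$. This is precisely where the hypothesis $\alpha>-1$ is used critically: the measure $dA_\alpha$ is finite and the interior contributions are harmless, whereas at $\alpha=-1$ (the Hardy case) the boundary carries all the mass and the comparison degenerates. I would handle the interior/boundary split by a compactness argument, absorbing the interior discrepancy into a finite-rank or compact perturbation that does not affect the range of the square root $D_\varphi$, and controlling the boundary with the factorization of $\varphi$ into Möbius factors. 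Proving that these perturbations genuinely preserve the range (not merely the essential range) is the delicate step, and I expect to lean on the equivalent-kernels principle to avoid operator-range subtleties altogether.
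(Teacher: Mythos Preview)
Your proposed reduction for $H^\alpha(\varphi)=A^2_{\alpha-1}$ contains a genuine error. You assert that the kernel domination $c\,K_{A^2_{\alpha-1}}\preceq K^{\alpha,\varphi}\preceq C\,K_{A^2_{\alpha-1}}$ reduces to positive-definiteness of $(1-\varphi(z)\overline{\varphi(w)})-c(1-z\overline w)$ and of $C(1-z\overline w)-(1-\varphi(z)\overline{\varphi(w)})$. But if both held with $0<c<C$, their sum $(C-c)(1-z\overline w)$ would be positive-definite on $\D$, and it is not: the $2\times2$ Gram matrix at the points $z_1=0$, $z_2=r\in(0,1)$ has determinant $(1)(1-r^2)-1=-r^2<0$. (The degenerate case $C=c$ would force $1-\varphi(z)\overline{\varphi(w)}\equiv c(1-z\overline w)$, which happens only when $\varphi$ is a rotation.) More fundamentally, positive-definiteness is not preserved under division by a common positive-definite Schur factor, so you cannot strip off $(1-z\overline w)^{-(2+\alpha)}$ from the inequality you actually need and compare numerators; the ``induction on degree'' you sketch has no valid base to stand on.

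Your alternate route for $H^\alpha(\overline\varphi)$ via $1-|\varphi(u)|^2\asymp 1-|u|^2$ is salvageable if recast as the \emph{operator} inequality $c\,T_{1-|z|^2}\le T_{1-|\varphi|^2}\le C\,T_{1-|z|^2}$ on $A^2_\alpha$ (then Douglas's range-inclusion theorem gives $H^\alpha(\overline\varphi)=H^\alpha(\overline z)=A^2_{\alpha-1}$), but the pointwise ``$\asymp$'' you write for the integral kernel in \eqref{eq4} does not by itself yield the required PSD comparison, and this still leaves $H^\alpha(\varphi)$ unaddressed since $E^\alpha_\varphi=I-T_\varphi T_\varphi^*$ is \emph{not} a Toeplitz operator on $A^2_\alpha$. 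Your fallback of ``absorbing the interior discrepancy into a compact perturbation that does not affect the range of $D_\varphi$'' is also unjustified: compact perturbations of a positive operator do change the range of its square root in general. The paper avoids all of this: it takes $H^\alpha(\varphi)=A^2_{\alpha-1}$ from \cite{Su06}, uses subnormality of $T_\varphi$ for $H^\alpha(\overline\varphi)\subseteq H^\alpha(\varphi)$, and obtains the reverse inclusion by showing (via a Schur product of the $A^2_{\alpha-1}$ kernel with the de Branges--Rovnyak kernel $(1-\varphi(z)\overline{\varphi(w)})/(1-z\overline w)$) that $\varphi$ is a multiplier of $H^\alpha(\varphi)$, whence Lemma~\ref{8} turns $\varphi H^\alpha(\varphi)\subseteq H^\alpha(\varphi)\cap M^\alpha(\varphi)$ into $H^\alpha(\varphi)\subseteq H^\alpha(\overline\varphi)$.
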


\begin{proof}
Since the result is already known for $\alpha\ge0$, we will assume that $-1 < \alpha < 0$.

By the definition of $A^2_{\alpha-1}$, it is not hard to see that any function that is analytic on the closed unit disk is a multiplier of $A^2_{\alpha-1}$. In particular, $T_\varphi$ is a bounded operator on $A^2_{\alpha-1}$. If $\|T_{\varphi}\|_{B(A^2_{\alpha-1})} = C < \infty$, then
$$(I- T_{\varphi} T_{\varphi}^*/C^2)K^{\alpha-1}_w(z) = \frac{1-\varphi(z)\overline{\varphi(w)}/C^2}{(1-z\overline{w})^{1+\alpha}} \succeq 0.$$
Thus by the Schur product theorem (\cite{PR16}),
$$(1-\varphi(z)\overline{\varphi(w)}/C^2)\frac{(1-\varphi(z)\overline{\varphi(w)})}{(1-z\overline{w})^{2+\alpha}} = \frac{1-\varphi(z)\overline{\varphi(w)}/C^2}{(1-z\overline{w})^{1+\alpha}}\frac{1-\varphi(z)\overline{\varphi(w)}}{1-z\overline{w}} \succeq 0.$$
It follows that $\varphi/C$ is a contractive multiplier of $H^{\alpha}(\varphi)$. Thus $\varphi H^{\alpha}(\varphi) \subseteq H^{\alpha}(\varphi)$. Combining this with $H^{\alpha}(\varphi) \subseteq A^2_\alpha$, we obtain
$$\varphi H^{\alpha}(\varphi) \subseteq H^{\alpha}(\varphi) \cap \varphi A^2_\alpha = H^{\alpha}(\varphi) \cap M^\alpha(\varphi).$$
By Lemma~\ref{8}, we also have $\varphi H^{\alpha}(\varphi) \subseteq \varphi H^{\alpha}(\overline{\varphi})$,
so $H^{\alpha}(\varphi) \subseteq H^{\alpha}(\overline{\varphi})$.

To finish the proof, we note $H^{\alpha}(\varphi) = A^2_{\alpha-1}$ (\cite{Su06}) and use the fact that the
subnormality of $T_\varphi$ gives $H^{\alpha}(\overline{\varphi}) \subseteq H^{\alpha}(\varphi)$ in general.
\end{proof}

\begin{lemma}
Let $\varphi$ be a non-constant function in $H^\infty_1$. Then the following conditions are equivalent.
\begin{itemize}
\item[(a)] $\varphi$ is a finite Blaschke product.
\item[(b)] $1-|\varphi(z)|^2\to0$ as $|z|\to1^-$.
\item[(c)] $(1-|\varphi(z)|^2)/(1-|z|^2)$ is bounded both above and below on $\D$.
\end{itemize}
\label{11}
\end{lemma}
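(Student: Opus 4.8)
The plan is to establish the cycle of implications $(a)\Rightarrow(c)\Rightarrow(b)\Rightarrow(a)$, which is the most economical route since two of the three links are nearly immediate. The implication $(c)\Rightarrow(b)$ needs only the upper bound: if $(1-|\varphi(z)|^2)/(1-|z|^2)\le C$ on $\D$, then $1-|\varphi(z)|^2\le C(1-|z|^2)\to0$ as $|z|\to1^-$. The genuine content lies in $(a)\Rightarrow(c)$ and, above all, in $(b)\Rightarrow(a)$.

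For $(a)\Rightarrow(c)$ I would write $\varphi=\zeta\prod_{k=1}^n\frac{a_k-z}{1-\overline{a_k}z}$ and exploit that $\varphi$ extends analytically across $\T$ (its poles $1/\overline{a_k}$ lie outside $\overline{\D}$) with $|\varphi|=1$ there. The key computation is that on $\T$ the logarithmic derivative is real and strictly positive: using $b_a'(z)=-(1-|a|^2)/(1-\overline az)^2$ together with the identity $1-\overline az=-z\,\overline{(a-z)}$ valid for $|z|=1$, one gets $\mathrm{Re}\,\frac{z\varphi'(z)}{\varphi(z)}=\sum_{k=1}^n\frac{1-|a_k|^2}{|z-a_k|^2}$ on $\T$, a finite sum of positive terms that is bounded above and below by positive constants, uniformly in $z\in\T$. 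Since $\frac{d}{dr}|\varphi(r\zeta)|^2\big|_{r=1}=2\,\mathrm{Re}\,\frac{\zeta\varphi'(\zeta)}{\varphi(\zeta)}$, the radial derivative of $|\varphi|^2$ on the boundary is pinched between two positive constants; by continuity and compactness this persists in a collar $1-\delta\le|z|<1$, giving $c_1(1-|z|)\le1-|\varphi(z)|^2\le C_1(1-|z|)$ there, hence both bounds for the quotient near $\T$. On the compact remainder $|z|\le1-\delta$ the quotient is a continuous positive function (positive because $|\varphi|<1$ strictly inside, by the maximum principle applied to the non-constant $\varphi$), so it is bounded above and below there as well.

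The main obstacle is $(b)\Rightarrow(a)$, which I would handle through the inner--function factorization. From $(b)$ the radial limits satisfy $|\varphi^*|=1$ a.e., so $\varphi$ is inner; and since $1-|\varphi(z)|^2<\tfrac12$ once $|z|$ is close to $1$, all zeros of $\varphi$ lie in a compact subset of $\D$ and are therefore finite in number. Factoring $\varphi=BS$ with $B$ the finite Blaschke product carrying these zeros and $S$ a zero-free (hence singular) inner function, I note $|B(z)|\to1$ as $|z|\to1^-$ just as in the previous step, so $(b)$ forces $|S(z)|\to1$, i.e.\ the Poisson integral $P[\mu](z)\to0$, where $S=\gamma\exp(-\int_\T\frac{\zeta+z}{\zeta-z}\,d\mu(\zeta))$ with $\mu\ge0$ singular. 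The clean way to conclude is the mean-value identity $\int_\T P[\mu](r\eta)\,dm(\eta)=\mu(\T)$, valid for every $r<1$: letting $r\to1$ and using the decay of $P[\mu]$ gives $\mu(\T)=0$, so $S$ is a unimodular constant and $\varphi$ is a finite Blaschke product. I expect the delicate point to be justifying that $(b)$ yields \emph{uniform} boundary decay of $P[\mu]$ over each circle $|z|=r$, so that the mean-value argument applies; making explicit that the hypothesis ``$1-|\varphi(z)|^2\to0$ as $|z|\to1^-$'' is uniform in the argument of $z$ is exactly what powers this step and lets me avoid appealing to finer Fatou-type results on symmetric derivatives of $\mu$.
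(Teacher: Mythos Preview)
Your argument is correct. It differs from the paper's in two places. For $(a)\Rightarrow(c)$ the paper simply cites \cite{Zhu03}, whereas you give a self-contained computation via the logarithmic derivative of a finite Blaschke product on $\T$; your route is longer but avoids the external reference. For $(b)\Rightarrow(a)$, both you and the paper first observe that $\varphi$ is inner with only finitely many zeros, and factor $\varphi=BS$ with $B$ a finite Blaschke product and $S$ singular inner. The paper then invokes the standard fact that any nontrivial singular inner function has at least one $\zeta\in\T$ with $S(r\zeta)\to0$ as $r\to1^-$, which immediately contradicts the uniform hypothesis $|\varphi(z)|\to1$. You instead pass to $P[\mu]=-\log|S|$ and use the Fubini identity $\int_\T P[\mu](r\eta)\,dm(\eta)=\mu(\T)$ together with the uniform decay to force $\mu(\T)=0$. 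Both arguments hinge on exactly the same point you flagged: condition~(b) is a \emph{uniform} limit as $|z|\to1^-$, not merely radial. The paper's route is a one-line appeal to a classical property of singular inner functions; yours is slightly more hands-on but uses nothing beyond the Poisson-kernel normalization, which some readers may find more transparent.
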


\begin{proof}
The equivalence of (a) and (c) was proved in \cite{Zhu03}. It is trivial that (c) implies (b).

If (b) holds, then $|\varphi(z)|\to1$ uniformly as $|z|\to1^-$, so $\varphi$ is an inner function. It is clear that
$\varphi$ cannot have infinitely many zeros. If $\varphi$ contains a singular inner factor $S$, then there exists at
least one point $\zeta\in\T$ such that $S(z)\to0$ as $z$ approaches $\zeta$ radially, which contradicts with the
limit $|\varphi(z)|\to1$ as $|z|\to1^-$. Thus $\varphi$ cannot contain any singular inner factor. Hence $\varphi$
must be a finite Blaschke product. This shows that (b) implies (a) and completes the proof of the lemma.
\end{proof}

\begin{lemma}
Suppose $\alpha>-1$ and $T: A^2_\alpha\to A^2_\alpha$ is a bounded linear operator. If the range of $T$ is
contained in $A^2_\gamma$ for some $\gamma<\alpha$, then $T$ belongs to the Schatten class $S_p$ for
all $p>2/(\alpha-\gamma)$.
\label{12}
\end{lemma}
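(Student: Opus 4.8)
The plan is to factor $T$ through the natural inclusion of $A^2_\gamma$ into $A^2_\alpha$ and then to show that this inclusion already belongs to the desired Schatten class; the ideal property of $S_p$ finishes the job. Since $\gamma<\alpha$, the weight $(1-|z|^2)^\gamma$ dominates $(1-|z|^2)^\alpha$ near the boundary, so $A^2_\gamma\subseteq A^2_\alpha$ and the inclusion $J:A^2_\gamma\to A^2_\alpha$ is bounded. Viewing $T$ instead as a map $S:A^2_\alpha\to A^2_\gamma$ with the smaller target (legitimate because the range of $T$ lies in $A^2_\gamma$ by hypothesis), I would invoke the closed graph theorem to see that $S$ is bounded: if $f_k\to f$ in $A^2_\alpha$ and $Sf_k\to g$ in $A^2_\gamma$, then $Tf_k\to g$ in $A^2_\alpha$ as well, while $Tf_k\to Tf$ by boundedness of $T$ on $A^2_\alpha$, forcing $g=Tf=Sf$. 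Thus $T=JS$ with $S$ bounded, and it suffices to prove $J\in S_p$ for every $p>2/(\alpha-\gamma)$.

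Next I would compute the singular values of $J$ explicitly by diagonalizing it against the monomial systems. The monomials $z^n$ are mutually orthogonal in every space $A^2_\beta$ (by rotation invariance of the defining norm), with
$$\|z^n\|_\beta^2\asymp(n+1)^{-(\beta+1)},$$
the exact value being $n!\,\Gamma(2+\beta)/\Gamma(n+2+\beta)$ when $\beta>-2$. Hence $e_n^\gamma=z^n/\|z^n\|_\gamma$ is an orthonormal basis of $A^2_\gamma$, $e_n^\alpha=z^n/\|z^n\|_\alpha$ is orthonormal in $A^2_\alpha$, and $J$ is diagonal: $Je_n^\gamma=(\|z^n\|_\alpha/\|z^n\|_\gamma)\,e_n^\alpha$. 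Therefore the singular values of $J$ are
$$s_n=\frac{\|z^n\|_\alpha}{\|z^n\|_\gamma}\asymp(n+1)^{(\gamma-\alpha)/2}.$$

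The final step is the convergence analysis. From the estimate above,
$$\sum_{n=0}^\infty s_n^p\asymp\sum_{n=1}^\infty(n+1)^{p(\gamma-\alpha)/2},$$
and this series converges precisely when $p(\gamma-\alpha)/2<-1$, that is, when $p>2/(\alpha-\gamma)$. This shows $J\in S_p$ for exactly that range, and then $T=JS\in S_p$ by the two-sided ideal property of the Schatten classes. I expect the factorization and diagonalization to be essentially formal once the inclusion $A^2_\gamma\subseteq A^2_\alpha$ and the closed graph argument are set up; the only genuinely delicate point is the Gamma-ratio (Stirling) asymptotic $\|z^n\|_\beta^2\asymp(n+1)^{-(\beta+1)}$, which must be valid for all real $\beta$ (in particular for $\gamma$ possibly $\le-2$, as happens when $\alpha$ is close to $-1$ and $\gamma=\alpha-2$) in order to pin down the sharp exponent $2/(\alpha-\gamma)$ claimed in the statement.
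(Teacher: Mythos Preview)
Your argument is correct and essentially the same as the paper's: factor through the inclusion $J=i:A^2_\gamma\hookrightarrow A^2_\alpha$, use the closed graph theorem to see $T$ is bounded into $A^2_\gamma$, diagonalize the inclusion against the monomial bases, and read off the Schatten class from the Gamma-ratio asymptotics. The only cosmetic difference is that the paper computes the eigenvalues of $i^*i$ on $A^2_\gamma$ (your $s_n^2$) and then passes from $T^*T=T^*(i^*i)T\in S_{p/2}$ to $T\in S_p$, whereas you compute the singular values $s_n$ of $J$ directly; your remark about the Stirling asymptotic needing to hold for $\gamma\le-2$ is apt, and the paper explicitly notes after its proof that this range requires a modified argument.
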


\begin{proof}
It is well known that if $\gamma<\alpha$, then $A^2_\gamma\subset A^2_\alpha$, and the inclusion mapping
$i: A^2_\gamma\to A^2_\alpha$ is bounded. If $T$ maps $A^2_\alpha$ into $A^2_\gamma$, then by the closed
graph theorem, there exists a constant $C>0$ such that $\|Tf\|_{A^2_\gamma}\le C\|f\|_{A^2_\alpha}$ for all
$f\in A^2_\alpha$, that is, $T$ can be thought of as a bounded linear operator from $A^2_\alpha$ into
$A^2_\gamma$. We can then write $T=iT$ and $T^*T=T^*(i^*i)T$.

The operator $i^*i: A^2_\gamma\to A^2_\gamma$ is positive. With respect to the monomial orthonormal basis
$\{e_n=c_nz^n\}$ of $A^2_\gamma$ from Section 2, the operator $i^*i$ is diagonal with the corresponding
eigenvalues given by
$$\langle i^*ie_n, e_n\rangle_{A^2_\gamma}=c_n^2\langle z^n, z^n\rangle_{A^2_\alpha}
=\frac{\Gamma(n+2+\gamma)}{n!\,\Gamma(2+\gamma)}\,\frac{n!\,\Gamma(2+\alpha)}{\Gamma(n+2+\alpha)}
\sim\frac1{(n+1)^{\alpha-\gamma}}$$
as $n\to\infty$. This shows that $i^*i$ belongs to the Schatten class $S_p$ of $A^2_\gamma$ for all $p$ with
$p(\alpha-\gamma)>1$. Thus $T$ belongs to the Schatten class $S_p$ of $A^2_\alpha$ whenever
$p>2/(\alpha-\gamma)$.
\end{proof}

Note that the result above remains true even if the parameters $\alpha$ and $\gamma$ fall below $-1$,
although the proof needs to be modified. Details are omitted. We now prove the main results of this section
in the next two theorems.

Recall that
$$D_\varphi^\alpha = \left(I-T_\varphi T_\varphi^*\right)^{1/2},\qquad
D_{\overline\varphi}^\alpha=\left(I-T_\varphi^*T_\varphi\right)^{1/2}$$
are the defect operators, and
$$E_\varphi^\alpha=I-T_\varphi T_\varphi^*,\qquad
E_{\overline\varphi}^\alpha=I-T_\varphi^*T_\varphi.$$

\begin{theorem}
Suppose $\alpha>-1$ and $\varphi\in H^\infty_1$. Then the following conditions are equivalent.
\begin{itemize}
\item[(a)] The defect operator $D^\alpha_\varphi$ is compact on $A^2_\alpha$.
\item[(b)] The function $\varphi$ is a finite Blaschke product.
\item[(c)] The space $H^\alpha(\varphi)$ equals $A^2_{\alpha-1}$.
\item[(d)] The space $H^\alpha(\varphi)$ is contained in $A^2_{\alpha-1}$.
\end{itemize}
\label{13}
\end{theorem}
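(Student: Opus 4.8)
The plan is to prove the equivalence through the cyclic chain $(b)\Rightarrow(c)\Rightarrow(d)\Rightarrow(a)\Rightarrow(b)$, of which three links are immediate from results already in hand. The implication $(b)\Rightarrow(c)$ is exactly Lemma~\ref{9}: a finite Blaschke product $\varphi$ gives $H^\alpha(\varphi)=A^2_{\alpha-1}$. The implication $(c)\Rightarrow(d)$ is trivial. For $(d)\Rightarrow(a)$ I would note that $H^\alpha(\varphi)$ is by definition the range of $D^\alpha_\varphi$, so the containment $H^\alpha(\varphi)\subseteq A^2_{\alpha-1}$ says precisely that the bounded operator $D^\alpha_\varphi$ on $A^2_\alpha$ maps into the smaller space $A^2_{\alpha-1}$. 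Applying Lemma~\ref{12} with $\gamma=\alpha-1<\alpha$ then gives $D^\alpha_\varphi\in S_p$ for all $p>2/(\alpha-(\alpha-1))=2$, and in particular $D^\alpha_\varphi$ is compact.

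The one substantive step is $(a)\Rightarrow(b)$. Here I would first replace $D^\alpha_\varphi$ by $E^\alpha_\varphi=(D^\alpha_\varphi)^2$, which is compact precisely when $D^\alpha_\varphi$ is, and then test $E^\alpha_\varphi$ against the reproducing kernels $K_w$ of $A^2_\alpha$. Using $T_\varphi^*K_w=\overline{\varphi(w)}\,K_w$ one computes
$$E^\alpha_\varphi K_w=\bigl(1-\varphi\,\overline{\varphi(w)}\bigr)K_w,\qquad \langle E^\alpha_\varphi K_w,K_w\rangle=\bigl(1-|\varphi(w)|^2\bigr)K_w(w).$$
Normalizing by $k_w=K_w/\sqrt{K_w(w)}$ collapses this to the clean identity
$$\langle E^\alpha_\varphi k_w,k_w\rangle=1-|\varphi(w)|^2.$$

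To conclude, I would invoke two standard facts. Since $2+\alpha>0$, one has $\langle f,k_w\rangle=f(w)(1-|w|^2)^{(2+\alpha)/2}$, which tends to $0$ for every polynomial $f$; with $\|k_w\|=1$ and density of polynomials this shows $k_w\to0$ weakly as $|w|\to1^-$. A compact operator sends weakly null sequences to norm null ones, so compactness of $E^\alpha_\varphi$ forces $\langle E^\alpha_\varphi k_w,k_w\rangle\to0$, that is, $1-|\varphi(w)|^2\to0$ as $|w|\to1^-$. For non-constant $\varphi$ this is condition (b) of Lemma~\ref{11}, which then forces $\varphi$ to be a finite Blaschke product; the constant case is immediate, since $1-|\varphi|^2\to0$ forces $|\varphi|\equiv1$.

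The main obstacle is precisely this implication $(a)\Rightarrow(b)$, and within it the decisive point is the identity $\langle E^\alpha_\varphi k_w,k_w\rangle=1-|\varphi(w)|^2$: it is the bridge that turns the abstract compactness hypothesis into the concrete boundary decay $1-|\varphi(w)|^2\to0$ that Lemma~\ref{11} is designed to exploit. The other three implications are essentially bookkeeping on top of Lemmas~\ref{9}, \ref{11}, and~\ref{12}.
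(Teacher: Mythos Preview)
Your proof is correct and follows essentially the same route as the paper: the cycle $(b)\Rightarrow(c)\Rightarrow(d)\Rightarrow(a)\Rightarrow(b)$ via Lemmas~\ref{9}, \ref{12}, and the reproducing-kernel computation $\langle E^\alpha_\varphi k_w,k_w\rangle=1-|\varphi(w)|^2$ together with Lemma~\ref{11}. You in fact supply slightly more detail than the paper does (the weak convergence of $k_w$ and the constant case), but the argument is the same.
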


\begin{proof}
To prove (a) implies (b), we consider the normalized reproducing kernels
$$k_a(z)=\frac{K_a(z)}{\|K_a\|}=\frac{K(z,a)}{\sqrt{K(a,a)}}=\frac{(1-|a|^2)^{(2+\alpha)/2}}{(1-z\overline a)^{2+\alpha}}$$
for $A^2_\alpha$. It is easy to see that $k_a\to0$ weakly in $A^2_\alpha$ as $|a|\to1^-$. If $D^\alpha_\varphi$ is
compact, then so is $E^\alpha_\varphi$, which implies that $\langle E^\alpha_\varphi k_a, k_a\rangle\to0$ as
$|a|\to1^-$. It is easy to see that $T^*_\varphi k_a=\overline{\varphi(a)}\,k_a$, so we have
\begin{equation}
\langle E^\alpha_\varphi k_a, k_a\rangle=\langle(I-T_\varphi T^*_\varphi)k_a, k_a\rangle
=1-\langle T^*_\varphi k_a, T^*_\varphi k_z\rangle=1-|\varphi(a)|^2.
\label{eq11}
\end{equation}
Thus the compactness of $D^\alpha_\varphi$ implies $1-|\varphi(a)|^2\to0$ as $|a|\to1^-$, which, according to
Lemma~\ref{11}, shows that $\varphi$ is a finite Blaschke product. This proves (a) implies (b).

Lemma~\ref{9} states that (b) implies (c). It is trivial that (c) implies (d). It follows from Lemma~\ref{12} that
(d) implies (a). This completes the proof of the theorem.
\end{proof}

\begin{theorem}
Suppose $\alpha>-1$ and $\varphi\in H^\infty_1$. Then the following conditions are equivalent.
\begin{itemize}
\item[(a)] The defect operator $D^\alpha_{\overline\varphi}$ is compact on $A^2_\alpha$.
\item[(b)] The function $\varphi$ is a finite Blaschke product.
\item[(c)] The space $H^\alpha(\overline\varphi)$ equals $A^2_{\alpha-1}$.
\item[(d)] The space $H^\alpha(\overline\varphi)$ is contained in $A^2_{\alpha-1}$.
\end{itemize}
\label{14}
\end{theorem}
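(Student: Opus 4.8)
The plan is to prove the theorem through the cycle of implications (a)$\Rightarrow$(b)$\Rightarrow$(c)$\Rightarrow$(d)$\Rightarrow$(a), exactly paralleling Theorem~\ref{13}. Three of the four implications are immediate from results already in hand. First, (b)$\Rightarrow$(c) is Lemma~\ref{9}, which gives $H^\alpha(\overline\varphi)=A^2_{\alpha-1}$ whenever $\varphi$ is a finite Blaschke product. Next, (c)$\Rightarrow$(d) is trivial. Finally, (d)$\Rightarrow$(a) follows from Lemma~\ref{12}: since $H^\alpha(\overline\varphi)=\ran D^\alpha_{\overline\varphi}$, hypothesis (d) says that the bounded operator $D^\alpha_{\overline\varphi}$ maps $A^2_\alpha$ into $A^2_{\alpha-1}$, and as $\alpha-1<\alpha$ Lemma~\ref{12} yields $D^\alpha_{\overline\varphi}\in S_p$ for every $p>2$; in particular $D^\alpha_{\overline\varphi}$, and hence $E^\alpha_{\overline\varphi}$, is compact. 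Thus the only substantial point is (a)$\Rightarrow$(b), and this is where the argument must depart from Theorem~\ref{13}.

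The difficulty is that the clean computation $\langle E^\alpha_\varphi k_a,k_a\rangle=1-|\varphi(a)|^2$ used before rested on $k_a$ being an eigenvector of $T^*_\varphi$, which is no longer available for the conjugate operator. Instead one only gets
$$\langle E^\alpha_{\overline\varphi} k_a,k_a\rangle=1-\|T_\varphi k_a\|^2=\intD\bigl(1-|\varphi(z)|^2\bigr)|k_a(z)|^2\,dA_\alpha(z),$$
the Berezin transform of $1-|\varphi|^2$ at $a$. Because $1-|\varphi|^2$ is superharmonic, this quantity is dominated by $1-|\varphi(a)|^2$, so its decay at the boundary is a priori weaker than $1-|\varphi(a)|^2\to0$ and cannot be fed directly into Lemma~\ref{11}. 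The heart of the proof is therefore to produce a matching lower bound for this Berezin transform along a suitable sequence.

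I would argue by contraposition. Suppose $\varphi$ is not a finite Blaschke product. By Lemma~\ref{11} there exist $\delta>0$ and points $z_n\to\bD$ with $1-|\varphi(z_n)|^2\ge\delta$. Fix a hyperbolic radius $R>0$ and let $D(z_n,R)$ denote the hyperbolic disk of radius $R$ centered at $z_n$. By the Schwarz--Pick lemma $\varphi$ is a hyperbolic contraction, so it maps $D(z_n,R)$ into the hyperbolic disk of radius $R$ about $\varphi(z_n)$; since $|\varphi(z_n)|$ stays bounded away from $1$ uniformly in $n$, this image lies in a fixed compact subset of $\D$, and hence there is $\delta'=\delta'(\delta,R)>0$ with $1-|\varphi(z)|^2\ge\delta'$ for all $z\in D(z_n,R)$. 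Consequently
$$\langle E^\alpha_{\overline\varphi}k_{z_n},k_{z_n}\rangle\ge\delta'\int_{D(z_n,R)}|k_{z_n}(z)|^2\,dA_\alpha(z)\ge\delta'\,c_R,$$
where $c_R=\inf_{a\in\D}\int_{D(a,R)}|k_a|^2\,dA_\alpha>0$ is the standard lower bound for the mass that a normalized Bergman kernel places on a fixed-radius hyperbolic disk about its center. On the other hand $k_{z_n}\to0$ weakly in $A^2_\alpha$, so if $E^\alpha_{\overline\varphi}$ were compact we would have $\langle E^\alpha_{\overline\varphi}k_{z_n},k_{z_n}\rangle\to0$, a contradiction. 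This establishes (a)$\Rightarrow$(b) and closes the cycle.

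I expect the main obstacle to be precisely the step just described: converting the pointwise bound $1-|\varphi(z_n)|^2\ge\delta$ into a lower bound on the averaged quantity $\langle E^\alpha_{\overline\varphi}k_{z_n},k_{z_n}\rangle$, against the unfavorable superharmonic domination noted above. The Schwarz--Pick propagation of the bound from the single point $z_n$ to an entire hyperbolic disk, combined with the uniform mass estimate $c_R>0$, is exactly what overcomes this; all remaining implications are bookkeeping with Lemmas~\ref{9}, \ref{11} and \ref{12}.
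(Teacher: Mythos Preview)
Your proof is correct and follows the same cycle (a)$\Rightarrow$(b)$\Rightarrow$(c)$\Rightarrow$(d)$\Rightarrow$(a) as the paper, with the three easy implications handled identically via Lemmas~\ref{9}, \ref{11}, \ref{12}. The substantive step (a)$\Rightarrow$(b) is also morally the same argument as the paper's, but the packaging differs in a way worth noting. The paper recognizes $E^\alpha_{\overline\varphi}=T_{1-|\varphi|^2}$ as a Toeplitz operator with nonnegative symbol and invokes the compactness criterion from \cite[Corollary~7.9]{Zhu07} to get vanishing of the hyperbolic averages $\frac1{A_\alpha(D(a,r))}\int_{D(a,r)}(1-|\varphi|^2)\,dA_\alpha\to0$; then it uses the Bloch Lipschitz estimate \cite[Theorem~5.5]{Zhu07} to propagate a pointwise bound $|\varphi(a_n)|<\sigma$ over a small hyperbolic disk and reach a contradiction. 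You instead compute the Berezin transform directly, use the Schwarz--Pick lemma (available because $\|\varphi\|_\infty\le1$) for the propagation, and the standard mass estimate $\inf_a\int_{D(a,R)}|k_a|^2\,dA_\alpha>0$ to close. Your route is a bit more self-contained---it avoids citing the Toeplitz compactness machinery and the Bloch estimate, both of which are heavier than needed here---while the paper's version makes the connection to the general Toeplitz theory explicit. Either way, the essential mechanism is identical: compactness forces a vanishing averaged quantity, and a hyperbolic-disk lower bound produced from a single bad point yields the contradiction.
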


\begin{proof}
First assume that condition (a) holds. Taking the square of $D^\alpha_{\overline\varphi}$, we see that the
Toeplitz operator $T_{1-|\varphi|^2}$ (with nonnegative
symbol) is compact on $A^2_\alpha$. It follows from Corollary 7.9 of \cite{Zhu07} that for any positive $r>0$ we have
$$\lim_{|a|\to1^-}\frac1{A_\alpha(D(a,r))}\int_{D(a,r)}(1-|\varphi(z)|^2)\,dA_\alpha(z)=0,$$
where $D(a,r)=\{z\in\D: \beta(z,a)<r\}$ is the Bergman metric ball with center $a$ and radius $r$, and $A_\alpha(D(a,r))$ is the $dA_\alpha$ measure of $D(a,r)$.
Equivalently,
\begin{equation}
\lim_{|a|\to1^-}\frac1{A_\alpha(D(a,r))}\int_{D(a,r)}|\varphi(z)|^2\,dA_\alpha(z)=1.
\label{eq7}
\end{equation}
We claim that this implies $|\varphi(z)|^2\to1$ uniformly as $|z|\to1^-$. In fact, if this conclusion is not true, then
there exist a constant $\sigma\in(0,1)$ and a sequence $\{a_n\}$ in $\D$ such that $|a_n|\to1$ as $n\to\infty$ and
$|\varphi(a_n)|<\sigma$ for all $n\ge1$.

If $z\in D(a_n, r)$, then by Theorem 5.5 of \cite{Zhu07},
$$|\varphi(z)|\le|\varphi(z)-\varphi(a_n)|+|\varphi(a_n)|
\le\|\varphi\|_{\HB}\,\beta(z,a_n)+\sigma<\|\varphi\|_{\HB}r+\sigma,$$
where
$$\|\varphi\|_{\HB}=\sup_{z\in\D}(1-|z|^2)|\varphi'(z)|$$
is Bloch semi-norm of $\varphi$ (recall that every function in $H^\infty$ belongs to the Bloch space).
If we use a sufficiently small radius $r$ such that the constant $\delta=\|\varphi\|_{\HB}r+\sigma<1$, then
$$\frac1{A_\alpha(D(a_n,r))}\int_{D(a_n,r)}|\varphi(z)|^2\,dA_\alpha(z)\le\delta^2<1$$
for all $n\ge1$. This is a contradiction to (\ref{eq7}).

Thus we must have $|\varphi(z)|^2\to1$ uniformly as $|z|\to1^-$. By Lemma~\ref{11}, $\varphi$ is a finite
Blaschke product. This proves that (a) implies (b).

It follows from Lemma \ref{9} that (b) implies (c). It is trivial that (c) implies (d). That (d) implies (a) follows
from Lemma~\ref{12}.
\end{proof}

It follows from the proof of the theorem above that, for $\alpha>-1$, $k>0$, and $\varphi\in H^\infty_1$,
the Toeplitz operator $T_{(1-|\varphi|^2)^k}$ is compact on $A^2_\alpha$ if and only if
$\varphi$ is a finite Blaschke product.

\section{The range of $I-T_\varphi T_{\varphi}^*$ and $I-T_{\varphi}^*T_\varphi$}

In this section we study the range of the operators $E^\alpha_\varphi$ and $E^\alpha_{\overline\varphi}$.
The special case $\alpha=0$ was considered in \cite{Zhu03}. It is clear that $D^\alpha_\varphi$ is compact on
$A^2_\alpha$ if and only if $E^\alpha_\varphi$ is compact on $A^2_\alpha$. Similarly, $D^\alpha_{\overline\varphi}$
is compact on $A^2_\alpha$ if and only if $E^\alpha_{\overline\varphi}$ is compact on $A^2_\alpha$.

\begin{prop}\label{15}
Suppose $\alpha>-1$ and $\varphi$ is a finite Blaschke product. Then
\begin{align}
A^2_{\alpha-1}&=\left\{f(z)=\intD\frac{1-|\varphi(w)|^2}{(1-z\overline w)^{2+\alpha}}\,g(w)\,dA_\alpha(w):
g\in A^2_{\alpha+1}\right\}\label{eq13}\\
&=\left\{f(z)=\intD\frac{1-|\varphi(w)|^2}{(1-z\overline w)^{2+\alpha}}\,g(w)\,dA_\alpha(w):
g\in L^2(\D, dA_{\alpha+1})\right\}.\label{eq14}
\end{align}
\end{prop}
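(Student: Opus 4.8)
The plan is to realize the common integral operator appearing in both (\ref{eq13}) and (\ref{eq14}) as a composition $R\circ M_\rho$, where $M_\rho$ is a multiplication operator made invertible by the finite-Blaschke hypothesis and $R$ is an explicit diagonal ``model'' operator, and then to read off both ranges from these two factors. Write $\rho(w)=(1-|\varphi(w)|^2)/(1-|w|^2)$. By Lemma~\ref{11} there are constants $0<c\le C<\infty$ with $c\le\rho(w)\le C$ on $\D$, so $M_\rho$ is a bounded, invertible, self-adjoint operator on $L^2(\D,dA_{\alpha+1})$. Since $(1-|\varphi(w)|^2)\,dA_\alpha(w)$ is a positive constant multiple of $\rho(w)\,dA_{\alpha+1}(w)$, the operator
$$Sg(z)=\intD\frac{1-|\varphi(w)|^2}{(1-z\overline w)^{2+\alpha}}\,g(w)\,dA_\alpha(w)$$
equals a constant multiple of $R(\rho g)$, where
$$Rg(z)=\intD\frac{g(w)}{(1-z\overline w)^{2+\alpha}}\,dA_{\alpha+1}(w).$$

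First I would analyze $R$. For fixed $z\in\D$ the function $w\mapsto(1-\overline z w)^{-(2+\alpha)}$ is analytic and lies in $A^2_{\alpha+1}$, so $Rg(z)$ is the $L^2(dA_{\alpha+1})$-pairing of $g$ against an element of $A^2_{\alpha+1}$; hence $R=RP_{\alpha+1}$, where $P_{\alpha+1}$ is the orthogonal projection of $L^2(\D,dA_{\alpha+1})$ onto $A^2_{\alpha+1}$, and in particular $R(L^2(\D,dA_{\alpha+1}))=R(A^2_{\alpha+1})$. A term-by-term integration (using the binomial expansion of $(1-z\overline w)^{-(2+\alpha)}$ and the Beta integrals $\intD|w|^{2n}\,dA_{\alpha+1}(w)$) shows that $R$ is diagonal on monomials, $R(z^n)=\lambda_n z^n$ with $\lambda_n$ a positive constant times $(n+2+\alpha)^{-1}$, so $\lambda_n$ behaves like $(n+1)^{-1}$. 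Comparing this with the coefficient descriptions $\|\sum a_nz^n\|_{A^2_{\alpha+1}}^2\sim\sum|a_n|^2(n+1)^{-(\alpha+2)}$ and $\|\sum b_nz^n\|_{A^2_{\alpha-1}}^2\sim\sum|b_n|^2(n+1)^{-\alpha}$ from Section~2, the quantity $|\lambda_n|^2(n+1)^{\alpha+2}/(n+1)^{\alpha}$ stays between two positive constants, so $R$ is a bounded and boundedly invertible map of $A^2_{\alpha+1}$ onto $A^2_{\alpha-1}$. This gives $R(A^2_{\alpha+1})=A^2_{\alpha-1}$, and with the previous observation also $R(L^2(\D,dA_{\alpha+1}))=A^2_{\alpha-1}$.

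Now I would combine the two factors. For (\ref{eq14}) note that $M_\rho$ maps $L^2(\D,dA_{\alpha+1})$ onto itself, so
$$S\bigl(L^2(\D,dA_{\alpha+1})\bigr)=R\bigl(M_\rho L^2(\D,dA_{\alpha+1})\bigr)=R\bigl(L^2(\D,dA_{\alpha+1})\bigr)=A^2_{\alpha-1}.$$
For (\ref{eq13}) I would instead use the Toeplitz operator $T_\rho=P_{\alpha+1}M_\rho$ on $A^2_{\alpha+1}$. Since $\langle T_\rho f,f\rangle=\int_\D\rho\,|f|^2\,dA_{\alpha+1}$ and $c\le\rho\le C$, we have $cI\le T_\rho\le CI$, so $T_\rho$ is invertible on $A^2_{\alpha+1}$. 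Because $R=RP_{\alpha+1}$, for analytic $g$ we have $Sg=\mathrm{const}\cdot R(\rho g)=\mathrm{const}\cdot R(T_\rho g)$, whence
$$S\bigl(A^2_{\alpha+1}\bigr)=R\bigl(T_\rho A^2_{\alpha+1}\bigr)=R\bigl(A^2_{\alpha+1}\bigr)=A^2_{\alpha-1}.$$
Thus both (\ref{eq13}) and (\ref{eq14}) equal $A^2_{\alpha-1}$.

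The routine points to check are the absolute convergence and analyticity of the defining integrals (immediate, since $dA_{\alpha+1}$ is finite, $\rho$ is bounded, and $w\mapsto(1-z\overline w)^{-(2+\alpha)}$ is bounded and analytic for fixed $z\in\D$) and the exact normalizing constants, which are harmless because only boundedness above and below is needed. I expect the real work to be the two structural facts that do the lifting: the diagonalization of $R$ exhibiting it as an isomorphism $A^2_{\alpha+1}\to A^2_{\alpha-1}$, and the invertibility of $T_\rho$ (equivalently of $M_\rho$). Both hinge on Lemma~\ref{11}, i.e.\ on $\varphi$ being a finite Blaschke product so that $1-|\varphi|^2$ is comparable to $1-|w|^2$; this comparability is precisely what converts the $w$-weight $1-|\varphi|^2$ into the clean weight $1-|w|^2$ that makes $R$ diagonal.
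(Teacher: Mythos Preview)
Your argument is correct. The factorization $S=\text{const}\cdot R\circ M_\rho$, the identity $R=RP_{\alpha+1}$, the diagonalization $R(z^n)=\frac{2+\alpha}{n+2+\alpha}\,z^n$, and the invertibility of $T_\rho=P_{\alpha+1}M_\rho$ on $A^2_{\alpha+1}$ via $cI\le T_\rho\le CI$ all check out, and together they give both equalities.

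Your route, however, is genuinely different from the paper's. The paper does not diagonalize anything; instead it observes that $S_\varphi$ is just the defect operator $E^\alpha_{\overline\varphi}=I-T_\varphi^*T_\varphi$ with its domain enlarged from $A^2_\alpha$ to $A^2_{\varphi,\alpha}\ (=A^2_{\alpha+1}$ by Lemma~\ref{11}), invokes an abstract range identification from \cite{Zhu96} to get $\mathrm{Ran}\,S_\varphi=H^\alpha(\overline\varphi)$, and then quotes Lemma~\ref{9} ($H^\alpha(\overline\varphi)=A^2_{\alpha-1}$) to finish. So the paper's proof leans on the sub-Bergman machinery already built, whereas your proof is self-contained and never touches $H^\alpha(\overline\varphi)$ or Lemma~\ref{9}. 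The advantage of your approach is transparency and independence from the earlier structure theory; the paper's approach has the virtue of explaining \emph{why} the range is $A^2_{\alpha-1}$ in operator-theoretic terms (it is literally the sub-Bergman space), which is what Lemma~\ref{16} then exploits.
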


\begin{proof}
Let
$$dA_{\varphi,\alpha}(z)=(1-|\varphi(z)|^2)\,dA_\alpha(z)$$
and let $A^2_{\varphi,\alpha}$ denote the space
of analytic functions in $L^2(\D, dA_{\varphi,\alpha})$. It follows from Lemma~\ref{11} that
$$L^2(\D, dA_{\varphi,\alpha})=L^2(\D, dA_{\alpha+1}),\qquad A^2_{\varphi,\alpha}=A^2_{\alpha+1},$$
with equivalent norms. Consider the operator $S_\varphi: A^2_{\varphi,\alpha}\to A^2_\alpha$ defined by
\begin{equation}
S_\varphi f(z)=P_\alpha[(1-|\varphi|^2)f](z)=\intD\frac{1-|\varphi(w)|^2}{(1-z\overline w)^{2+\alpha}}
\,f(w)\,dA_\alpha(w),
\label{eq15}
\end{equation}
where $P_\alpha: L^2(\D, dA_\alpha)\to A^2_\alpha$ is the orthogonal projection. It is clear that $S_\varphi$ is
simply the operator $E^\alpha_{\overline\varphi}$ with its domain extended to the larger space $A^2_{\varphi,\alpha}$.

Now the first desired equality (\ref{eq13}) follows from the proof of Proposition 3.5
in \cite{Zhu96}, word by word, together with the fact that $H^\alpha(\overline\varphi)=A^2_{\alpha-1}$ from
the previous section. The second equality (\ref{eq14}) follows from the same argument by replacing the
operator $S_\varphi$ above by its extension $S_\varphi: L^2(\D, dA_{\varphi,\alpha})\to A^2_\alpha$, still defined
by (\ref{eq15}). We leave the routine details to the interested reader.
\end{proof}

\begin{lemma}
If $\alpha>-1$ and $\varphi$ is a finite Blaschke product, then
$$E^\alpha_\varphi(A^2_\alpha)=E^\alpha_{\overline\varphi}(A^2_\alpha)=A^2_{\alpha-2}.$$
\label{16}
\end{lemma}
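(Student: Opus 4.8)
The plan is to prove the two equalities in turn, starting with the $\overline\varphi$-side identity $E^\alpha_{\overline\varphi}(A^2_\alpha)=A^2_{\alpha-2}$, which is the more tractable of the two because the operator has an explicit integral form, and then deriving $E^\alpha_\varphi(A^2_\alpha)=A^2_{\alpha-2}$ from it. Throughout I would use three facts guaranteed by $\varphi$ being a finite Blaschke product: $H^\alpha(\varphi)=H^\alpha(\overline\varphi)=A^2_{\alpha-1}$ (Lemma~\ref{9}), the two-sided estimate $1-|\varphi(z)|^2\asymp 1-|z|^2$ (Lemma~\ref{11}), and compactness of the defect operators (Theorems~\ref{13}--\ref{14}).

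For the $\overline\varphi$-side, recall from the discussion preceding Proposition~\ref{15} that $E^\alpha_{\overline\varphi}$ is the restriction to $A^2_\alpha$ of
$$S_\varphi g(z)=\intD\frac{1-|\varphi(w)|^2}{(1-z\overline w)^{2+\alpha}}\,g(w)\,dA_\alpha(w).$$
Proposition~\ref{15} shows $S_\varphi(A^2_{\alpha+1})=A^2_{\alpha-1}$; my claim is that the identical estimates, with the domain $A^2_{\alpha+1}$ narrowed to $A^2_\alpha$, yield $S_\varphi(A^2_\alpha)=A^2_{\alpha-2}$. The driving observation is that Lemma~\ref{11} rewrites the defining measure as $(1-|\varphi|^2)\,dA_\alpha\asymp dA_{\alpha+1}$, so that on $A^2_\alpha$ the operator $S_\varphi$ is comparable to $g\mapsto\intD g(w)(1-z\overline w)^{-(2+\alpha)}\,dA_{\alpha+1}(w)$; the latter is diagonalised by the monomials with factors $\asymp(n+1)^{-1}$, hence lowers the weight parameter by exactly two and carries $A^2_\beta$ boundedly and boundedly below onto $A^2_{\beta-2}$. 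Taking $\beta=\alpha$ gives the $\overline\varphi$-side identity. (Equivalently, one may invoke Proposition~\ref{15} at base level $\alpha-1$ after checking that the level-$\alpha$ and level-$(\alpha-1)$ kernels are comparable on $A^2_\alpha$.)

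For the $\varphi$-side I would use the intertwining identity $E^\alpha_\varphi T_\varphi=T_\varphi E^\alpha_{\overline\varphi}$, immediate from $T_\varphi(I-T_\varphi^*T_\varphi)=(I-T_\varphi T_\varphi^*)T_\varphi$. Applying both sides to $A^2_\alpha$ and using $E^\alpha_{\overline\varphi}(A^2_\alpha)=A^2_{\alpha-2}$ gives
$$E^\alpha_\varphi(\varphi A^2_\alpha)=\varphi\,E^\alpha_{\overline\varphi}(A^2_\alpha)=\varphi A^2_{\alpha-2}.$$
Since $\varphi$ is a finite Blaschke product, $T_\varphi$ is bounded below with closed range of finite codimension $d=\deg\varphi$ on every $A^2_\gamma$ (classical, and for $\gamma>-1$ a consequence of Theorem~\ref{14}); so $\varphi A^2_\alpha$ has finite codimension in $A^2_\alpha$ and $\varphi A^2_{\alpha-2}$ has codimension $d$ in $A^2_{\alpha-2}$. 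Thus, writing $A^2_\alpha=\varphi A^2_\alpha\oplus M$ with $M$ the finite-dimensional space $\ker T_\varphi^*$ (spanned by reproducing kernels at the zeros of $\varphi$, hence by functions analytic on $\overline{\D}$), the range $E^\alpha_\varphi(A^2_\alpha)$ already contains the codimension-$d$ subspace $\varphi A^2_{\alpha-2}$ of $A^2_{\alpha-2}$.

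The hard part is closing this finite-dimensional gap to get the exact identity. I would do so through the polar decomposition $T_\varphi=U(T_\varphi^*T_\varphi)^{1/2}$, where $U$ is an isometry with range $\varphi A^2_\alpha$: this yields $E^\alpha_\varphi=UE^\alpha_{\overline\varphi}U^*+(I-UU^*)$ and hence $E^\alpha_\varphi(A^2_\alpha)=U(A^2_{\alpha-2})+M$. It then remains to verify that $(T_\varphi^*T_\varphi)^{-1/2}$ preserves $A^2_{\alpha-2}$ --- which is plausible because $T_\varphi^*T_\varphi=I-E^\alpha_{\overline\varphi}$ and $E^\alpha_{\overline\varphi}$ maps $A^2_{\alpha-2}$ into the smaller space $A^2_{\alpha-4}$, so that the inverse square root is given by a convergent series of operators preserving $A^2_{\alpha-2}$ --- whence $U(A^2_{\alpha-2})=\varphi A^2_{\alpha-2}$, together with the fact that the evaluation space $M$ complements $\varphi A^2_{\alpha-2}$ inside $A^2_{\alpha-2}$. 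A cleaner alternative I would pursue in parallel is to bypass the intertwining and prove both inclusions for $E^\alpha_\varphi(A^2_\alpha)$ directly, by the same kernel estimates as in the $\overline\varphi$-case but now with the coupled kernel $K^{\alpha,\varphi}_w(z)=(1-\varphi(z)\overline{\varphi(w)})(1-z\overline w)^{-(2+\alpha)}$, following the $\alpha=0$ computation of \cite{Zhu03}.
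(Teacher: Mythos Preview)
Your overall strategy --- handle $E^\alpha_{\overline\varphi}$ first via its integral representation, then pass to $E^\alpha_\varphi$ through the intertwining identity --- matches the paper's. But both halves of your execution have genuine gaps.

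\textbf{The $\overline\varphi$-side.} The step ``$(1-|\varphi|^2)\,dA_\alpha\asymp dA_{\alpha+1}$, so $S_\varphi$ is comparable to the diagonal operator $g\mapsto\int g(w)(1-z\overline w)^{-(2+\alpha)}\,dA_{\alpha+1}(w)$, hence has the same range'' is a heuristic, not an argument. Comparable weights do not give comparable operators in any sense that controls ranges: writing $1-|\varphi|^2=\Psi(w)(1-|w|^2)$ with $\Psi$ bounded above and below, you get $S_\varphi g=\int\Psi(w)g(w)(1-z\overline w)^{-(2+\alpha)}\,dA_{\alpha+1}(w)$, but $\Psi g$ is not holomorphic, so the diagonal computation does not apply. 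The paper avoids this by differentiating: $(E^\alpha_{\overline\varphi}f)'=P_{\alpha+1}(\Phi f)$ with $\Phi\in L^\infty$, and then invokes boundedness of $P_{\alpha+1}$ on $L^2(\D,dA_\alpha)$ to get the inclusion into $A^2_{\alpha-2}$. For surjectivity it applies Proposition~\ref{15} at level $\alpha+1$ (not $\alpha-1$) to a fractional radial derivative $Rg$, then unwinds using Proposition~\ref{15} again at level $\alpha$.

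\textbf{The $\varphi$-side.} You correctly reach $E^\alpha_\varphi(\varphi A^2_\alpha)=\varphi A^2_{\alpha-2}$, but the polar-decomposition detour to close the finite-dimensional gap is both unnecessary and incomplete. Your power-series justification that $(T_\varphi^*T_\varphi)^{-1/2}$ preserves $A^2_{\alpha-2}$ needs $\sum c_k(E^\alpha_{\overline\varphi})^k$ to converge in $B(A^2_{\alpha-2})$, not just in $B(A^2_\alpha)$; the bound $\|E^\alpha_{\overline\varphi}\|_{A^2_\alpha\to A^2_\alpha}<1$ says nothing about the operator norm on the smaller space, and the observation that $E^\alpha_{\overline\varphi}(A^2_{\alpha-2})\subset A^2_{\alpha-4}$ gives range nesting but no norm control. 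The paper's route is far simpler: since $M=\ker T_\varphi^*=A^2_\alpha\ominus\varphi A^2_\alpha$, the operator $E^\alpha_\varphi=I-T_\varphi T_\varphi^*$ restricts to the \emph{identity} on $M$. Hence $E^\alpha_\varphi(A^2_\alpha)=M\oplus\varphi A^2_{\alpha-2}$ directly. You already noted that $M$ is spanned by reproducing kernels (and their derivatives) at the zeros of $\varphi$, so $M\subset A^2_{\alpha-2}$; together with $\dim M=\dim(A^2_{\alpha-2}\ominus\varphi A^2_{\alpha-2})$ this gives $E^\alpha_\varphi(A^2_\alpha)=A^2_{\alpha-2}$ with no functional calculus needed.
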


\begin{proof}
As a Toeplitz operator on $A^2_\alpha$, we can write
$$E^\alpha_{\overline\varphi}f(z)=\intD\frac{1-|\varphi(w)|^2}{(1-z\overline w)^{2+\alpha}}\,f(w)\,dA_\alpha(w),
\qquad f\in A^2_\alpha.$$
It follows that
$$(E^\alpha_{\overline\varphi}f)'(z)=\intD\frac{\Phi(w)}{(1-z\overline w)^{3+\alpha}}\,f(w)\,dA_{\alpha+1}(w)
=P_{\alpha+1}(\Phi f)(z),$$
where $P_{\alpha+1}: L^2(\D, dA_{\alpha+1})\to A^2_{\alpha+1}$ is the orthogonal projection and
$$\Phi(w)=\frac{(\alpha+1)\overline w(1-|\varphi(w)|^2)}{1-|w|^2}.$$
By Lemma \ref{11}, $\Phi\in L^\infty(\D)$. It follows from Theorem 3.11 of \cite{Zhu07} that $P_{\alpha+1}$
maps $L^2(\D, dA_\alpha)$ boundedly to $A^2_\alpha$. Therefore, $f\in A^2_\alpha$ implies
$(E^\alpha_{\overline\varphi}f)'\in A^2_{\alpha}$, which is clearly equivalent to $E^\alpha_{\overline\varphi}f
\in A^2_{\alpha-2}$. This proves that $E^\alpha_{\overline\varphi}$ maps $A^2_\alpha$ into $A^2_{\alpha-2}$.

To show that the mapping $E^\alpha_{\overline\varphi}: A^2_\alpha\to A^2_{\alpha-2}$ is onto, we switch from the
ordinary derivative $(E^\alpha_{\overline\varphi}f)'$ to a certain fractional radial differential operator $R$
($=R^{2+\alpha,1}$ using the notation from \cite{ZZ08}) of order $1$:
$$RE^\alpha_{\overline\varphi}f(z)=\intD\frac{1-|\varphi(w)|^2}{(1-z\overline w)^{3+\alpha}}\,f(w)\,dA_\alpha(w)$$
It is still true that $E^\alpha_{\overline\varphi}f\in A^2_{\alpha-2}$ if and only if
$RE^\alpha_{\overline\varphi}f\in A^2_\alpha$. See \cite{ZZ08}.

Fix any function $g\in A^2_{\alpha-2}$. Then the function $Rg$ belongs to $A^2_\alpha$. It follows from
Proposition~\ref{15}, with $\alpha$ in (\ref{eq13}) and (\ref{eq14}) replaced by $\alpha+1$, that there exists
a function $h\in L^2(\D, dA_{\alpha+2})$ such that
$$Rg(z)=\intD\frac{1-|\varphi(w)|^2}{(1-z\overline w)^{3+\alpha}}\,h(w)\,dA_{\alpha+1}(w).$$
Applying the inverse of $R$ to both sides, we obtain
$$g(z)=\intD\frac{1-|\varphi(w)|^2}{(1-z\overline w)^{2+\alpha}}\,h(w)\,dA_{\alpha+1}(w).$$
Let $\widetilde h(w)=(1-|w|^2)h(w)$. Then $\widetilde h\in L^2(\D, dA_\alpha)$ and
$$g(z)=\intD\frac{1-|\varphi(w)|^2}{(1-z\overline w)^{2+\alpha}}\,\widetilde h(w)\,dA_\alpha(w).$$
By Proposition~\ref{15} again, there exists a function $f\in A^2_\alpha$ such that
$$g=\intD\frac{1-|\varphi(w)|^2}{(1-z\overline w)^{2+\alpha}}\,f(w)\,dA_\alpha(w),$$
or $g=E^\alpha_{\overline\varphi}f$. Thus we have shown that $E^\alpha_{\overline\varphi}(A^2_\alpha)
=A^2_{\alpha-2}$.

Next we show $E^\alpha_\varphi(A^2_\alpha)=A^2_{\alpha-2}$. Note that
$T_\varphi$ is a Fredholm operator. So $\varphi A^2_\alpha$ is closed in $A^2_\alpha$,
$\ker (T_\varphi^*) = A^2_\alpha \ominus\varphi A^2_\alpha$, and
$A^2_\alpha = (A^2_\alpha \ominus\varphi A^2_\alpha)\oplus \varphi A^2_\alpha$. Since
$$(I-T_\varphi T_\varphi^*)\varphi f =\varphi (I-T_\varphi^*T_\varphi) f, \quad f \in A^2_\alpha,$$
it follows that
$$E^\alpha_\varphi(A^2_\alpha)=(A^2_\alpha\ominus\varphi A^2_\alpha)\oplus \varphi E^\alpha_{\overline\varphi}(A^2_\alpha) = (A^2_\alpha\ominus\varphi A^2_\alpha)\oplus \varphi A^2_{\alpha-2}.$$
Since $A^2_\alpha \ominus\varphi A^2_\alpha$ consists of the reproducing kernels or the derivative of the reproducing kernels in $A^2_\alpha$, we have $A^2_\alpha \ominus\varphi A^2_\alpha \subseteq A^2_{\alpha-2}$. Also
$$\dim (A^2_\alpha \ominus\varphi A^2_\alpha) = \dim (A^2_{\alpha-2} \ominus\varphi A^2_{\alpha-2}).$$
Thus we obtain that
$$E^\alpha_\varphi(A^2_\alpha) = (A^2_{\alpha-2} \ominus\varphi A^2_{\alpha-2}) + \varphi A^2_{\alpha-2} = A^2_{\alpha-2},$$
completing the proof of the lemma.
\end{proof}

We can now prove the main result of this section, namely, the next two theorems.

\begin{theorem}
Suppose $\alpha>-1$ and $\varphi\in H^\infty_1$. Then the following conditions are equivalent.
\begin{itemize}
\item[(a)] The operator $E^\alpha_{\varphi}$ is compact on $A^2_\alpha$.
\item[(b)] The function $\varphi$ is a finite Blaschke product.
\item[(c)] The range of $E^\alpha_\varphi$ equals $A^2_{\alpha-2}$.
\item[(d)] The range of $E^\alpha_\varphi$ is contained in $A^2_{\alpha-2}$.
\end{itemize}
\label{17}
\end{theorem}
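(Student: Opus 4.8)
The plan is to establish the theorem by closing the cycle of implications $(a)\Rightarrow(b)\Rightarrow(c)\Rightarrow(d)\Rightarrow(a)$, so that each condition is shown to follow from the previous one, relying almost entirely on the machinery already assembled in this and the preceding section. For $(a)\Rightarrow(b)$ I would first use the elementary fact recorded at the opening of this section, that $E^\alpha_\varphi$ is compact on $A^2_\alpha$ if and only if $D^\alpha_\varphi=(E^\alpha_\varphi)^{1/2}$ is compact; since $E^\alpha_\varphi$ is a positive contraction, this equivalence is immediate from the spectral calculus (the two operators have the same eigenvectors, and $\lambda_n\to0$ iff $\sqrt{\lambda_n}\to0$). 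The implication $(a)\Rightarrow(b)$ of Theorem~\ref{13} then forces $\varphi$ to be a finite Blaschke product.

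The step $(b)\Rightarrow(c)$ is exactly the content of Lemma~\ref{16}, which asserts that $E^\alpha_\varphi(A^2_\alpha)=A^2_{\alpha-2}$ whenever $\varphi$ is a finite Blaschke product, so nothing new is required here. The implication $(c)\Rightarrow(d)$ is trivial, since equality of the range with $A^2_{\alpha-2}$ certainly entails containment in $A^2_{\alpha-2}$.

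The one implication demanding a genuine argument is $(d)\Rightarrow(a)$, and here I would invoke Lemma~\ref{12} with $\gamma=\alpha-2$. The operator $E^\alpha_\varphi=I-T_\varphi T_\varphi^*$ is bounded on $A^2_\alpha$ because $T_\varphi$ is a bounded multiplication operator, and by hypothesis $(d)$ its range lies in $A^2_{\alpha-2}$. Since $\alpha-(\alpha-2)=2$, Lemma~\ref{12} then places $E^\alpha_\varphi$ in the Schatten class $S_p$ for every $p>2/2=1$; in particular $E^\alpha_\varphi$ is compact, which is $(a)$. The only point to watch is that $\gamma=\alpha-2$ may fall below $-1$, so that the hypotheses of Lemma~\ref{12} as literally stated are not met; this is precisely what the remark following Lemma~\ref{12} addresses, guaranteeing that the Schatten-class conclusion persists in that regime.

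I expect no real obstacle inside the proof of Theorem~\ref{17} itself, since it is essentially a repackaging of earlier results into a single equivalence chain. The substantive depth of the statement resides in the two external inputs it consumes: the surjectivity half of Lemma~\ref{16}, which rests on Proposition~\ref{15} together with the fractional radial differential operator $R$ used to pass between $A^2_{\alpha-2}$ and $A^2_\alpha$; and the Schatten-class estimate of Lemma~\ref{12}, whose eigenvalue asymptotics $\sim (n+1)^{-(\alpha-\gamma)}$ for the comparison operator $i^*i$ furnish the quantitative bound that converts a range-containment hypothesis into compactness.
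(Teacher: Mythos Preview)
Your proposal is correct and follows essentially the same route as the paper: close the cycle $(a)\Rightarrow(b)\Rightarrow(c)\Rightarrow(d)\Rightarrow(a)$ using the $D^\alpha_\varphi$/$E^\alpha_\varphi$ compactness equivalence together with Theorem~\ref{13}, then Lemma~\ref{16}, the trivial inclusion, and Lemma~\ref{12}. Your added remarks about the spectral calculus, the explicit Schatten exponent, and the caveat that $\gamma=\alpha-2$ may drop below $-1$ (handled by the note after Lemma~\ref{12}) are all to the point.
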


\begin{proof}
Since $E^\alpha_\varphi=(D^\alpha_\varphi)^2$, the operator $E^\alpha_\varphi$ is compact if and only if
$D^\alpha_\varphi$ is compact. Thus the equivalence of (a) and (b) follows from Theorem~\ref{13}.

Lemma~\ref{16} shows that (b) implies (c). It is trivial that (c) implies (d). Finally, that (d) implies (a) follows
from Lemma~\ref{12}.
\end{proof}

\begin{theorem}
Suppose $\alpha>-1$ and $\varphi\in H^\infty_1$. Then the following conditions are equivalent.
\begin{itemize}
\item[(a)] The operator $E^\alpha_{\overline\varphi}$ is compact on $A^2_\alpha$.
\item[(b)] The function $\varphi$ is a finite Blaschke product.
\item[(c)] The range of $E^\alpha_{\overline\varphi}$ equals $A^2_{\alpha-2}$.
\item[(d)] The range of $E^\alpha_{\overline\varphi}$ is contained in $A^2_{\alpha-2}$.
\end{itemize}
\label{18}
\end{theorem}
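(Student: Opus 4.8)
The plan is to establish the cycle of implications (a)$\Rightarrow$(b)$\Rightarrow$(c)$\Rightarrow$(d)$\Rightarrow$(a) in exactly the same way as in the proof of Theorem~\ref{17}, since all of the substantive work has already been carried out in the preceding lemmas and theorems. The only difference from Theorem~\ref{17} is that the operator $E^\alpha_\varphi$ is replaced throughout by $E^\alpha_{\overline\varphi}$, and the corresponding input results ($D_{\overline\varphi}$ in place of $D_\varphi$) are available.

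First, for the equivalence of (a) and (b), I would use the factorization $E^\alpha_{\overline\varphi}=(D^\alpha_{\overline\varphi})^2$, so that $E^\alpha_{\overline\varphi}$ is compact on $A^2_\alpha$ if and only if the defect operator $D^\alpha_{\overline\varphi}$ is compact. By Theorem~\ref{14}, the compactness of $D^\alpha_{\overline\varphi}$ is equivalent to $\varphi$ being a finite Blaschke product. This gives (a)$\Leftrightarrow$(b). Next, that (b) implies (c) is precisely the content of Lemma~\ref{16}, which asserts $E^\alpha_{\overline\varphi}(A^2_\alpha)=A^2_{\alpha-2}$ whenever $\varphi$ is a finite Blaschke product. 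The implication (c)$\Rightarrow$(d) is trivial.

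Finally, to close the loop I would prove (d)$\Rightarrow$(a). If the range of $E^\alpha_{\overline\varphi}$ is contained in $A^2_{\alpha-2}$, then since $\gamma=\alpha-2<\alpha$, Lemma~\ref{12} shows that $E^\alpha_{\overline\varphi}$ belongs to the Schatten class $S_p$ for every $p>2/(\alpha-\gamma)=1$; in particular $E^\alpha_{\overline\varphi}$ is compact. Since every step reduces to an already-established result, there is no genuine obstacle here; the only point requiring care is the bookkeeping needed to confirm that Theorem~\ref{14} and Lemma~\ref{16} apply verbatim in the ``conjugate'' setting, and that the exponent computation in Lemma~\ref{12} does indeed yield an $S_p$-class (hence compact) conclusion for $\gamma=\alpha-2$.
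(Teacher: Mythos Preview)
Your proposal is correct and follows exactly the approach intended by the paper, whose own proof simply reads ``It is similar to the proof of Theorem~\ref{17}.'' You have correctly identified the substitutions (Theorem~\ref{14} for Theorem~\ref{13}, and the $E^\alpha_{\overline\varphi}$ half of Lemma~\ref{16}) and the cycle (a)$\Rightarrow$(b)$\Rightarrow$(c)$\Rightarrow$(d)$\Rightarrow$(a) closes without issue.
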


\begin{proof}
It is similar to the proof of Theorem~\ref{17}.
\end{proof}

Finally, we note that the main results of this and the previous section cannot be extended to the Hardy space $H^2$
(the case $\alpha=-1$). For example, in this case, if $\varphi(z)=z$, then $I-T_{\overline\varphi}T_{\varphi}=0$ and
$I-T_\varphi T_{\overline\varphi}$ is a rank-one operator. More generally, if $\varphi$ is any inner function, then
$I-T_{\overline\varphi}T_\varphi=0$.

\end{document}